

\documentclass[12pt]{amsart}

\usepackage{parskip}

\makeatletter
\def\thm@space@setup{%
  \thm@preskip=\parskip \thm@postskip=0pt
}
\makeatother

\usepackage[dvipsnames]{xcolor}
\usepackage{mathtools}
\usepackage{graphicx}
\usepackage{amsmath}
\usepackage{color}
\usepackage{caption}
\usepackage{subcaption}
\usepackage{mwe}
\usepackage{amsfonts,amssymb,amscd}
\usepackage{mathrsfs}





\setlength{\textwidth}{\paperwidth}
\addtolength{\textwidth}{-2in}
\calclayout


\newtheoremstyle{remboldstyle}
  {}{}{\itshape}{}{\bfseries}{.}{.5em}{{\thmname{#1 }}{\thmnumber{#2}}{\thmnote{ (#3)}}}
\theoremstyle{remboldstyle}



\usepackage[outdir=./]{epstopdf}




\newtheorem{thm}{Theorem}[section]
\newtheorem{prop}[thm]{Proposition}
\newtheorem{lem}[thm]{Lemma}

\newtheorem{conj}[thm]{Conjecture}




\theoremstyle{definition}
\newtheorem{definition}[thm]{Definition}

\newtheorem{rem}[thm]{Remark}
\newtheorem{construction}[thm]{Construction}
\newtheorem{notation}[thm]{Notation}

\numberwithin{equation}{section}




\newcommand{\C}{\mathbb{C}}  

\newcommand{\ol}{\overline}

\newcommand{\Chat}{\widehat{\mathbb{C}}}





\begin{document}


\title[logharmonic polynomials]{Sharp bounds for the valence of certain logharmonic polynomials}


\author{Kirill Lazebnik}
\thanks{\noindent The first author is supported in part by NSF grant DMS-2452130. } 


\author{Erik Lundberg}
\thanks{\noindent The second author is supported in part by Simons Foundation grant 712397.} 




\begin{abstract}
Consider a logharmonic polynomial; that is, a product of the form $p(z)\overline{q(z)}$, where $p$, $q$ are holomorphic polynomials. Assume $q$ is linear and denote by $n$ the degree of $p$. It was recently shown in \cite{KLP} that the valence of such a logharmonic polynomial is at most $3n-1$; in this paper we show that their $3n-1$ upper bound is sharp. Together with the work of \cite{KLP}, this resolves a conjecture in \cite{BshoutyHengartner}.
\end{abstract}


\maketitle


\section{Introduction}

In recent decades, there has been increasing interest in understanding the valence (number of preimages of a prescribed complex value) of polyanalytic functions; we refer to \cite[Introduction]{KLP} for an overview of this topic, including both the algebraic and complex analytic points of view.  Particular attention has been placed on the problem of determining the maximal valence over special classes, such as harmonic polynomials (sums of analytic and anti-analytic polynomials) or logharmonic polynomials (products of analytic and anti-analytic polynomials) of prescribed degree.

In the current paper, we are concerned with the latter setting of logharmonic polynomials.  The maximal valence problem for logharmonic polynomials asks for a sharp upper bound on the number of solutions to the equation $p(z)\overline{q(z)} = w$ in terms of $n:=\deg p$ and $m:=\deg q $, under the natural condition that $p$ is not a constant multiple of $q$ (which ensures the valence is finite \cite[Sec. 3]{AbdulhadiHengartner}).  This problem was posed by W. Hengartner at the second international workshop on planar harmonic mappings at the Technion, Haifa, January 7-13, 2000 and has been restated in \cite{BshoutyHengartner},  \cite{AbdulhadiHengartner}, \cite{Problems}, and \cite{KLP}. 
In the case $m=1$ (i.e., when $q$ is linear) Bshouty and Hengartner made a specific conjecture for the maximal valence \cite{BshoutyHengartner}.
\begin{conj}[Bshouty, Hengartner, 2000]
In the case $m=1$, the maximal valence is $3n-1$.
\end{conj}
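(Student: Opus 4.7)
The upper bound $3n-1$ is already established in \cite{KLP}, so the remaining task is to exhibit, for each $n\geq 1$, data $(p,q,w)$ with $\deg p=n$, $\deg q=1$, and $\#\{z\in\C:p(z)\overline{q(z)}=w\}=3n-1$. After translation one may take $q(z)=z-a$ with $a\in\R$; my plan is to work with a real-coefficient polynomial $p$ and a real value $w$, so that $F(z):=p(z)\overline{z-a}-w$ is invariant under $z\mapsto\bar{z}$.

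Writing $z=x+iy$, this symmetry forces $\im F$ to factor as $y\cdot G(x,y^{2})$ for some real polynomial $G$. The solutions then split into real solutions on $\{y=0\}$, given by $(x-a)p(x)=w$ and numbering at most $n+1$, and conjugate pairs of non-real solutions, obtained as intersections of $\{G(x,s)=0\}$ with $\{\re F=0\}$ in the variables $(x,s)$ with $s=y^{2}>0$. Reaching the target $3n-1$ thus corresponds to arranging exactly $n+1$ simple real solutions together with $n-1$ non-real conjugate pairs.

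I would first verify the plan in the smallest nontrivial case; indeed, taking $p(z)=z^{2}-1$, $q(z)=z-2$, $w=2$, a direct calculation shows that $\im F$ factors as $y\cdot(x^{2}+y^{2}-4x+1)$, revealing the circle $(x-2)^{2}+y^{2}=3$, and combining with $\re F=0$ produces the three real solutions $\{0,\,1\pm\sqrt{2}\}$ together with the single conjugate pair $(1/3,\pm\sqrt{2}/3)$, totaling $5=3(2)-1$ solutions. For general $n$, the aim is to take $p(z)=\prod_{j=1}^{n}(z-\alpha_{j})$ with distinct real $\alpha_{j}$, placed together with $a$ and $w$ so that $(x-a)p(x)=w$ has $n+1$ simple real roots while the bi-algebraic system $\{G(x,s)=0\}\cap\{\re F=0\}$ contributes exactly $n-1$ solutions in the half-plane $\{s>0\}$.

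The principal obstacle is establishing the correct count for all $n$: one must verify that the constructed $(n-1)$ conjugate pairs are all present, with no intersections escaping the half-plane $\{s>0\}$, coalescing into higher multiplicity, or drifting to infinity. I would handle this either via a deformation argument, starting from a symmetric initial configuration where the solution count can be tracked explicitly and invoking the upper bound $3n-1$ from \cite{KLP} together with continuity of roots to preserve the count under perturbation, or by analyzing an explicit family such as $p(z)=z^{n}+c$ or roots in arithmetic progression, combined with a Bezout-plus-parity argument on the bidegrees of $G$ and $\re F$.
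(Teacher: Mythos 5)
Your framework is sound as far as it goes: the upper bound is correctly delegated to \cite{KLP}, the symmetry reduction (real $p$, real $a$, real $w$, so that solutions split into real solutions and conjugate pairs) is valid, the target arithmetic $(n+1)+2(n-1)=3n-1$ is right, and your $n=2$ computation checks out (that case was already settled by Bshouty and Hengartner, as the paper notes). But the entire content of the conjecture lives in the step you leave open: proving, for \emph{every} $n$, that the $n-1$ conjugate pairs actually exist. Neither of your proposed strategies closes this gap. The deformation argument is circular: continuity of simple roots plus the KLP upper bound can only propagate a count of $3n-1$ from a configuration that already achieves it, and exhibiting such a configuration is precisely the problem; if instead you deform from a degenerate configuration, zeros of a harmonic (non-analytic) function need not split in a way that conserves any multiplicity count, so you get no a priori control on how many solutions emerge. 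The Bezout-plus-parity idea fails for a structural reason: Bezout bounds the number of intersections of $\{G=0\}$ with $\{\re F=0\}$ from \emph{above}, and parity gives information mod $2$, whereas what you need is an exact \emph{lower} bound (exactly $n-1$ intersections in the half-plane $\{s>0\}$), which neither tool can supply. The history recorded in the paper shows this is not a technical quibble: before this work, even $n=3$ and $n=4$ had only numerical evidence.

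The paper's route is fundamentally different, and the contrast shows what is missing from your plan. Sharpness is reformulated dynamically: solutions of $p(z)\overline{q(z)}=w$ are zeros of the harmonic function $H(z)=z-\overline{c}-1/\overline{p(z)}$, equivalently fixed points of the anti-rational map $z\mapsto\overline{c}+1/\overline{p(z)}$. The existence input is Theorem~\ref{main_thm}: a map of this form with $n$ attracting fixed points, obtained by perturbing Geyer's polynomials with $n-1$ fixed critical points (Theorem~\ref{Lukas}, which rests on Thurston's characterization of polynomials). These $n$ attracting fixed points are exactly the sense-preserving zeros of $H$, giving $N_+=n$, and the generalized argument principle (Lemma~\ref{lem:argprinc}) then forces $N_-=2n-1$ sense-reversing zeros automatically, for a total of $3n-1$. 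In other words, the paper never constructs the remaining $2n-1$ solutions by hand; they come for free from a topological count once the $n$ attracting fixed points exist. Note also that the paper's examples are themselves $\mathbb{R}$-symmetric, so your symmetry reduction is compatible with their construction; what your proposal lacks is any substitute for the dynamical existence theorem that drives the whole argument.
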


This conjecture entails two assertions, namely, (i) an upper bound of $3n-1$ for each $n\geq 1$ and (ii) sharpness of the upper bound, in other words, existence of examples attaining valence $3n-1$ for each $n \geq 1$.
The upper bound part of the conjecture was proved in \cite{KLP}.

\begin{thm}[Khavinson, Lundberg, Perry, 2024]\label{thm:KLP}
Let $p$ be a polynomial of degree $n>1$ and let $q(z)$ be linear.  For each $w \in \C$, the number of solutions of the equation $p(z)\ol{q(z)}=w$ is at most $3n-1$.
\end{thm}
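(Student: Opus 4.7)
The plan is to apply the argument principle for harmonic-type polyanalytic maps and reduce the theorem to bounding the number of sense-reversing solutions by $n$. First, via the affine change of variable sending $q(z)$ to $z$, we may assume $q(z) = z$, so the equation becomes $p(z)\overline{z} = w$. The case $w = 0$ admits at most $n+1 \leq 3n - 1$ solutions (zeros of $p$ together with $z = 0$), so we assume $w \neq 0$. Set $F(z) := p(z)\overline{z} - w$; its Jacobian is $J(z) = |p'(z)|^2|z|^2 - |p(z)|^2$, and let $N_+, N_-$ denote the number of zeros of $F$ with $J > 0$, $J < 0$ respectively. Since $F(z) \sim c_n z^n \overline{z}$ as $|z| \to \infty$, and this leading term has winding number $n - 1$ on a large circle, the argument principle yields $N_+ - N_- = n - 1$. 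Hence $N = 2N_- + (n-1)$, and it suffices to prove $N_- \leq n$.

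Next I would localize the sense-reversing zeros. At a zero of $F$ one has $|p(z)||z| = |w|$, so combining with the sense-reversing condition $|p'(z)||z| < |p(z)|$ yields $|P(z)| < |w|$, where $P(z) := p'(z) z^2$ has degree $n + 1$; conversely, any zero of $F$ satisfying $|P(z)| < |w|$ is sense-reversing. Thus $N_-$ equals the number of zeros of $F$ in the bounded sublevel set $U := P^{-1}(D_{|w|})$, the preimage of the open disk of radius $|w|$ under the polynomial $P$. Each such zero has local degree $-1$ for $F$, so applying the argument principle on $\partial U = P^{-1}(\{|\zeta| = |w|\})$ (a real algebraic curve whose components together wrap the target circle a total of $n+1$ times), the count $N_-$ equals the absolute value of the winding of $F - w$ around $0$ along $\partial U$.

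The main step, and the main obstacle, is to show this winding is at most $n$ in absolute value. The naive estimate based on the degree $n+1$ of $P$ would only give $n+1$, so the proof must extract one extra unit of cancellation. The plan is to exploit the linearity of $q$ via the conjugate equation $\overline{p(z)} z = \overline{w}$, which follows from $F(z) = 0$ by conjugation: since $F$ is linear in $\overline{z}$, the conjugate equation permits $\overline{z}$ to be eliminated from the winding computation on $\partial U$, effectively reducing the relevant degree from $n+1$ to $n$. Executing this reduction carefully, presumably through a resultant identity or an explicit argument-principle computation relating $F|_{\partial U}$ to $P$, is the core technical step. Given $N_- \leq n$, the theorem follows from $N = 2N_- + (n-1) \leq 3n - 1$.
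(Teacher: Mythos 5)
Your reduction is set up correctly and in fact reproduces the first half of the known argument: normalizing $q(z)=z$, the count $N_+-N_-=n-1$ from the argument principle (equivalently, the computation in Section \ref{sec:reformulate} of this paper with orientations conjugated, where poles of $1/\overline{p}$ contribute total order $n$), and the identification of the problem with bounding the sense-reversing solutions by $n$. But the step you yourself flag as the ``core technical step'' --- proving $N_-\leq n$ --- is precisely where the actual proof lives, and the route you propose for it would fail. A resultant elimination of $\overline{z}$ is purely algebraic: substituting $\overline{z}=w/p(z)$ into the conjugate equation $\overline{p(z)}\,z=\overline{w}$ and clearing denominators produces a holomorphic polynomial equation of degree $n^2+1$, not $n$, so no resultant identity can cap the count at $n$; the bound $3n-1$ is genuinely transcendental, not Bezout-type. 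Likewise, the winding of $F$ along $\partial U=P^{-1}(\{|\zeta|=|w|\})$ has no a priori relation to the degree $n+1$ of $P$ (on $\partial U$ you only control $|p'(z)z^2|$, not $\arg F$), so even your claimed ``naive estimate'' of $n+1$ is unsubstantiated, let alone the extra unit of cancellation.

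What actually closes the gap, in \cite{KLP}, is anti-holomorphic dynamics: your sense-reversing zeros are exactly the fixed points of the degree-$n$ anti-rational map $z\mapsto \overline{w}/\overline{p(z)}$ at which the multiplier has modulus less than $1$, i.e.\ its attracting fixed points. This map has $2n-2$ critical points counted with multiplicity, of which $\infty$ accounts for $n-1$, so it has at most $n$ \emph{distinct} critical points (this is Lemma \ref{branched_cover} of the present paper), and by Fatou's theorem (applied to the second iterate, which is holomorphic) the immediate basin of each attracting fixed point must contain a critical point, giving $N_-\leq n$. No winding-number bookkeeping on sublevel sets substitutes for this: the inequality uses the global structure of basins and critical orbits. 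Two smaller points: you should also dispose of singular zeros ($J=0$ on the solution set), since your index count assumes all zeros are nondegenerate --- in the degenerate case one perturbs $w$ and uses stability of the zero count, cf.\ \cite[Prop.~3]{BergErem2010} --- and note that with the dynamical bound in hand, your identity $N=2N_-+(n-1)$ does finish the proof exactly as you state. Finally, be aware that the paper under review does not reprove Theorem \ref{thm:KLP} at all; it cites \cite{KLP} and proves the complementary sharpness statement, so the comparison above is with the cited proof, whose ingredients nonetheless appear in Sections \ref{sec:reformulate} and 4 here.
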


We will now discuss the sharpness part of the Bshouty-Hengartner conjecture. For $n=2$, an example with $3n-1=5$ solutions was provided by Bshouty and Hengartner \cite{BshoutyHengartner}, where examples with $3n-3$ solutions were also observed.  Numerical evidence was presented in \cite{KLP} to support the case for sharpness in the cases $n=3$ and $n=4$, but leaving open a proof of sharpness for $n=3$, $n=4$ as well as general $n>4$. The purpose of the present paper is to prove the existence of such sharpness examples for all $n$, hence completing the proof of the Bshouty-Hengartner conjecture.

\begin{thm}\label{thm:sharp}
For each $n>1$ there exists a polynomial $p$ of degree $n$, a linear polynomial $q$, and a complex number $w$ such that the equation $p(z)\ol{q(z)}=w$ has exactly $3n-1$ solutions.
\end{thm}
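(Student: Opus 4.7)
After composing $q$ with an affine change of coordinates in $z$, it suffices to find, for each $n>1$, a polynomial $p$ of degree $n$ and a value $w\in\C$ for which $p(z)\overline{z}=w$ has $3n-1$ solutions. My plan is to search within the class of polynomials $p$ with real coefficients and real $w$. Writing $z=x+iy$, the reality of the coefficients forces $\im(p(z)\overline{z})$ to factor as $y\cdot H(x,y)$; consequently the solutions of $p(z)\overline{z}=w$ split into real solutions on $\{y=0\}$, which are the zeros of the degree-$(n+1)$ polynomial $xp(x)-w$, and non-real solutions, which come in complex-conjugate pairs and satisfy $H(x,y)=0$ together with $\re(p(z)\overline{z})=w$.

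Since $3n-1=(n-1)+2n$, my aim is to arrange for $n-1$ real solutions and $n$ distinct conjugate pairs of non-real solutions. I will work with the ansatz $p(z)=z^n+a_1z+a_0$ and tune the real parameters $a_0,a_1,w$. In polar coordinates $z=re^{i\theta}$, $c=\cos\theta$, and using the Chebyshev polynomials $U_k$ of the second kind, the non-real reduced system takes the clean form
\[
r^n\,U_{n-2}(c)=a_0,\qquad r^{n+1}\,U_{n-1}(c)=w-a_1r^2.
\]
Setting $t=r^2$ and eliminating $c$ between these equations yields a single polynomial equation in $t$; its positive real roots with $|c|<1$ parametrize the conjugate pairs of non-real solutions. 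The plan is to choose $(a_0,a_1,w)$ so that this $t$-equation has exactly $n$ admissible roots, contributing $2n$ non-real solutions, while $xp(x)-w$ has exactly $n-1$ real roots.

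For $n=3$ the elimination produces the cubic $t^3-a_1t^2+wt-a_0^2=0$, and a direct computation shows that with $a_0=128$, $a_1=232/3$, $w=5888/3$ this cubic has the three distinct admissible roots $24,\,64/3,\,32$, while $xp(x)-w$ has two real roots, yielding $2+6=8=3n-1$. For general $n$, I plan to construct an analogous family by choosing $a_1>0$ so that the derivative of $xp(x)$ is either strictly increasing (for $n$ odd) or has exactly one real critical point (for $n$ even), thereby controlling the Case~(A) real-root count; simultaneously $a_0$ and $w$ are scaled so that the $t$-equation acquires the required number of admissible roots. The relevant root counts will be verified by Descartes-type sign arguments applied to the explicit coefficients of the $t$-polynomial, together with asymptotic estimates for the admissibility condition $|c|<1$.

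The main obstacle is the coupling between the Case~(A) and Case~(B) conditions: the three parameters $(a_0,a_1,w)$ jointly determine both, so adjusting one generally perturbs the other. I will overcome this by identifying an asymptotic regime, for instance $a_1\to\infty$ with $a_0,w$ scaling in a prescribed way, in which the two conditions decouple and the required root counts become simultaneously achievable; if this precise ansatz turns out to be too rigid for large $n$, extra monomials such as $a_2z^2,a_3z^3,\ldots$ can be added to $p$ to supply additional degrees of freedom. A concluding perturbation argument, valid off the discriminant locus of the combined system, then produces a polynomial $p$ and value $w$ with $3n-1$ distinct solutions; combined with Theorem~\ref{thm:KLP}, this completes the proof of the Bshouty--Hengartner conjecture.
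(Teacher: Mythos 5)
Your reduction is sound, and your $n=3$ example is in fact correct: with $p(z)=z^3+\tfrac{232}{3}z+128$ and $w=\tfrac{5888}{3}$, the eliminated cubic $t^3-a_1t^2+wt-a_0^2$ does have the roots $24$, $64/3$, $32$ (their sum, pairwise sums of products, and product match $a_1$, $w$, $a_0^2$), each satisfies the admissibility condition $a_0<2t^{3/2}$, so you get six distinct non-real solutions; and $x^4+\tfrac{232}{3}x^2+128x-\tfrac{5888}{3}$ has exactly two real roots because its derivative is strictly increasing and its value at $0$ is negative. That gives $8=3\cdot 3-1$, by an elementary real-algebraic route entirely different from the paper's (which goes through anti-holomorphic dynamics and the harmonic argument principle).

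However, the theorem asserts sharpness for \emph{every} $n>1$, and for general $n$ your text is a research plan, not a proof; this is a genuine gap. Concretely: (i) for $n\ge 4$ the elimination of $c$ between $r^nU_{n-2}(c)=a_0$ and $r^{n+1}U_{n-1}(c)=w-a_1r^2$ is no longer a substitution, since $U_{n-2}$ has degree $n-2\ge 2$ in $c$; ``the $t$-equation'' is then a resultant whose roots need not correspond bijectively to common roots $c\in(-1,1)$, and whose root structure you never analyze. (ii) You give no argument that the two required counts --- exactly $n-1$ real roots of $xp(x)-w$ and exactly $n$ admissible $t$-roots --- can be achieved \emph{simultaneously}: the ``asymptotic regime'' in which they allegedly decouple is not exhibited, the Descartes-type sign arguments are not carried out, and the fallback of adding extra monomials concedes that the three-parameter ansatz may simply be insufficient. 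This simultaneous-count problem is precisely where the difficulty of the Bshouty--Hengartner sharpness conjecture lies (before this paper, only numerical evidence existed for $n=3,4$). The paper surmounts it by importing Geyer's theorem --- for every $n$ there is a real polynomial $p$ such that $\overline{p}$ has $n-1$ finite fixed critical points --- perturbing $p$ so that $z\mapsto\overline{c}+1/\overline{p(z)}$ has $n$ attracting fixed points, and then counting \emph{all} solutions of the resulting equation $p(z)\overline{(z-\overline{c})}=1$ with the generalized argument principle ($N_+=n$, $P_-=n$, winding number $1$ at infinity forces $N_-=2n-1$, total $3n-1$). Some input of comparable strength, valid for all $n$ at once, is what your outline is missing; as it stands you have proved the theorem only for $n=3$.
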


Showing sharpness can be reformulated as a problem in anti-holomorphic dynamics, and we arrive at Theorem \ref{thm:sharp} as a consequence of the following result (see Section \ref{sec:reformulate}):

\begin{thm}\label{main_thm} For each $n>1$, there exists a degree $n$ polynomial $p(z)$ and $c\in\mathbb{C}$ so that  $z\mapsto \overline{c}+1/\overline{p(z)}$ has $n$ attracting fixed points. 
\end{thm}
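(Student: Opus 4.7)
My plan is to construct, for each $n > 1$, a polynomial $p$ of degree $n$ and $c \in \C$ such that the anti-meromorphic map $F(z) = \ol{c} + 1/\ol{p(z)}$ on $\Chat$ has $n$ attracting fixed points. First, I record some structural facts: $F$ has degree $n$, its critical points on $\Chat$ are the $n-1$ zeros of $p'$ together with $\infty$ (the latter of local degree $n$, with $F(\infty) = \ol{c}$); a fixed point $z^\ast$ of $F$ satisfies $p(z^\ast)(\ol{z^\ast} - c) = 1$ and has multiplier $p'(z^\ast)/p(z^\ast)^2$ (so a zero of $p'$ that is fixed by $F$ is automatically super-attracting); and each attracting cycle of $F$ must contain a critical orbit, bounding the number of attracting cycles by $n$. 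Producing $n$ attracting fixed points therefore saturates this natural bound.

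My strategy is to arrange that (i) each zero $z_1, \ldots, z_{n-1}$ of $p'$ is a super-attracting fixed point of $F$, and (ii) the forward orbit $\infty \mapsto \ol{c} \mapsto F(\ol{c}) \mapsto \cdots$ of the remaining critical point $\infty$ converges to a new, $n$-th attracting fixed point. For (i), I parametrize $p$ by its $n-1$ critical points together with two additional scalars (say, leading coefficient and $p(0)$), so that $(p,c)$ ranges in an $(n+2)$-complex-dimensional parameter space. Imposing $p(z_k)(\ol{z_k}-c)=1$ for each $k$ provides $n-1$ real-analytic complex equations, cutting out a locus $\mathcal{M}$ of generic real dimension $6$. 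I would establish non-emptiness of $\mathcal{M}$ by continuation from a degenerate limit where all the $z_k$ coincide at a single point of multiplicity $n-1$ (at which (i) reduces to the single equation $B(\ol{z_\ast}-c)=1$), and then use the implicit function theorem to deform the $z_k$ apart into distinct positions while maintaining the super-attracting conditions.

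The main obstacle is (ii): showing that within $\mathcal{M}$ one can find parameters for which the critical orbit of $\infty$ converges to an $n$-th attracting fixed point of $F$, rather than to an attracting cycle of longer period or into one of the $n-1$ super-attracting basins already established. I would address this via a bifurcation-theoretic argument across $\mathcal{M}$: the subset on which $\infty$'s orbit lands on a new attracting fixed point is open in $\mathcal{M}$, and I would locate it by tracking the dynamics along a suitable path in $\mathcal{M}$ and identifying a saddle-node (or analogous) bifurcation at which the desired fixed point is born. An alternative is an inductive construction on $n$: verify the base case $n=2$ directly (e.g., $p(z)=z^2+b$ with $c=-1/b$ and $|b|$ small yields a super-attracting fixed point at $0$ together with a second attracting fixed point near $\ol{c}$), and then perturb a degree-$n$ example to degree $n+1$ by inserting a new zero of $p'$ which becomes a super-attracting fixed point, using the enlarged parameter freedom to simultaneously steer $\infty$'s orbit into an $(n+1)$-th attracting fixed-point basin. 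Implementing either approach rigorously for every $n$—in particular, ruling out that the free critical orbit is always captured by a longer attracting cycle—is where I expect the bulk of the technical work, and the main obstacle, to lie.
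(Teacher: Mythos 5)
Your structural facts (the critical set of $F$, the multiplier formula, the bound of $n$ on attracting cycles) are correct, and your base case $n=2$ works, but the proposal has genuine gaps exactly at the two steps you yourself flag as "the bulk of the technical work," and neither can be closed by the methods you sketch. For step (i): the conditions $p(z_k)(\ol{z_k}-c)=1$ are real-analytic, not holomorphic, in the critical points, and at your degenerate starting configuration (all $z_k$ collapsed to one point) the number of independent conditions drops from $n-1$ to $1$, so the implicit function theorem has no invertible Jacobian block to act on at precisely the point where you want to apply it; you verify no transversality anywhere. More tellingly, what (i) asks for --- all $n-1$ finite critical points of $F$ fixed --- is precisely the statement of the paper's \emph{harder} Theorem \ref{main_thm2}, which is the logharmonic analogue of Geyer's theorem (Theorem \ref{Lukas}) on Wilmshurst's conjecture; the known proofs of Geyer's theorem need Thurston's characterization of polynomials (or quasiconformal surgery), and the paper's proof of Theorem \ref{main_thm2} needs Geyer's theorem \emph{plus} quasiconformal surgery. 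A dimension count cannot substitute for this machinery. For step (ii): you leave it as an acknowledged sketch ("saddle-node bifurcation," "steer $\infty$'s orbit"), but by the paper's Proposition \ref{no_pcf} the orbit of $\infty$ in any example is necessarily infinite, so it cannot be pinned down by finitely many algebraic conditions; one must genuinely control an infinite orbit, and no mechanism is offered. The inductive step has the same defect: inserting a new fixed critical point while preserving the existing $n-1$ fixed critical points \emph{and} the capture of $\infty$'s orbit is not a small-perturbation statement.

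The paper avoids both difficulties by using Geyer's theorem as a black box and aiming only at Theorem \ref{main_thm} (attracting, not superattracting, fixed points). Geyer's anti-polynomial $\ol{p}$ already has $n$ attracting fixed points: the $n-1$ finite fixed critical points plus the superattracting point at $\infty$. Post-composing with the M\"obius map $M_\delta(z)=(z+\delta)/(1+\delta z)$ yields $r_\delta=M_\delta\circ p$, which for small $\delta\neq 0$ still has $n$ attracting fixed points (each attracting fixed point of $\ol{p}$ has a small disc mapped compactly into itself; this persists under perturbation, and the Schwarz lemma makes the resulting fixed points attracting), while $r_\delta$ now has a degree $n-1$ critical point at $\infty$ with the finite value $r_\delta(\infty)=1/\delta$, hence $r_\delta=c+1/p_\delta$ for some polynomial $p_\delta$ by Lemma \ref{form_of_r_lem}. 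If you want to salvage your plan, the missing ingredient is exactly this: import Geyer's theorem (or equivalent heavy machinery) rather than attempt to re-derive an even stronger statement from scratch in parameter space.
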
 

As discussed in \cite{KLP}, Hengartner's valence problem for logharmonic polynomials is reminiscent of Sheil-Small's valence problem for harmonic polynomials \cite{sheil-small_2002}.  In that vein, the Bshouty-Hengartner conjecture runs parallel to a conjecture of Wilmshurst \cite{W2} that the maximum valence of harmonic polynomials $p(z) + \overline{q(z)}$ with $q$ linear is at most $3n-2$.  The upper bound in Wilmshurst's conjecture was proven by Khavinson and Swiatek \cite{KhSw} and sharpness was shown by Lukas Geyer \cite{MR2358495}. In this paper, we form a direct connection between Wilmshurst's conjecture and the Bshouty-Hengartner conjecture by utilizing the following result of Geyer \cite{MR2358495} in our proof of Theorem \ref{main_thm}.

\begin{thm}\label{Lukas} \cite{MR2358495} For each $n>1$, there exists a degree $n$ real polynomial $p(z)$ so that $\overline{p(z)}$ has $n-1$ finite, fixed critical points. 
\end{thm}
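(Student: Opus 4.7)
My plan is to prove Theorem~\ref{Lukas} following the strategy pioneered by Geyer, namely to apply Thurston's theorem on the combinatorial characterization of post-critically finite branched covers. The objective is to produce an anti-holomorphic polynomial $f(z)=\overline{p(z)}$ with $\deg p = n$ whose $n-1$ finite critical points are all fixed by $f$, and then to use a symmetry constraint to force the coefficients of $p$ to be real.

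First I would construct a topological model. Concretely, I seek an orientation-reversing branched self-cover $F:S^2\to S^2$ of degree $n$ with $\infty$ as a fixed critical point of local degree $n$ and $n-1$ additional simple critical points $c_1,\dots,c_{n-1}$, each of which is fixed by $F$; by Riemann--Hurwitz these account for all critical points. I will also demand $\sigma$-equivariance, where $\sigma(z)=\overline{z}$, with the set $\{c_1,\dots,c_{n-1}\}$ invariant under $\sigma$. Such an $F$ is obtainable by starting from $z\mapsto \overline{z}^n$ (which has a super-attracting fixed point of local degree $n$ at $0$) and performing a $\sigma$-symmetric cut-and-paste surgery that splits the degenerate critical point at $0$ into $n-1$ simple, fixed critical points placed symmetrically about the real axis.

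Next I would invoke Thurston's theorem, passing to $F^2$ if needed in order to apply the standard orientation-preserving formulation: $F^2$ is a post-critically finite branched cover of degree $n^2$ whose post-critical set is exactly $\{c_1,\dots,c_{n-1},\infty\}$, i.e., consists entirely of fixed critical points. Thurston's characterization theorem then asserts that such an $F^2$ is combinatorially (and thus $F$ is ``anti-combinatorially'') equivalent to a unique-up-to-affine-conjugation polynomial, provided there is no Thurston obstruction. Because every post-critical point is a fixed critical point, a Levy--Berstein-type argument rules out any invariant multi-curve with Thurston matrix eigenvalue at least one: there is simply no room for a non-peripheral invariant curve system to support such an eigenvalue when each post-critical point is its own entire forward orbit. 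Consequently, $F$ is realized by an actual anti-holomorphic polynomial $f(z)=\overline{p(z)}$ whose $n-1$ finite critical points are fixed.

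Finally, to obtain real coefficients, I would exploit the $\sigma$-equivariance of $F$ together with Thurston rigidity. The map $z\mapsto \overline{f(\overline{z})}$ is another anti-holomorphic polynomial realizing the same $\sigma$-symmetric topological model, so by the uniqueness-up-to-affine-conjugacy clause of Thurston rigidity it agrees with $f$ after a suitable affine normalization (which can be made $\sigma$-equivariant thanks to the symmetry of the post-critical data). This forces $\overline{p(\overline{z})}=p(z)$, i.e.\ $p\in\mathbb{R}[z]$. I expect the main obstacle to be the combined verification that (a) the $\sigma$-symmetric topological model of Step 1 admits \emph{no} Thurston obstruction and (b) the rigidity/normalization step can be carried out while preserving $\sigma$-equivariance; both points require some care, but neither is expected to fail because the post-critical set is as dynamically rigid as possible.
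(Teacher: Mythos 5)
First, a point of comparison: the paper does not prove Theorem \ref{Lukas} itself --- it imports the result from Geyer \cite{MR2358495}, whose original argument is precisely the Thurston-theoretic route you outline (the paper notes that later proofs used the Krein--Milman theorem \cite{2014arXiv1411.3415L} and quasiconformal surgery \cite{MR4273178}). So your plan is methodologically the same as the cited source, and its skeleton is right: a $\sigma$-symmetric topological model of degree $n$ with all $n-1$ finite critical points simple and fixed, exclusion of obstructions because every post-critical point is a fixed critical point, realization via Thurston's theorem, and realness via rigidity. Your no-obstruction heuristic can indeed be made rigorous: a degree-one preimage component isotopic rel the post-critical set to a curve of a Levy cycle would bound a disk mapped homeomorphically that nevertheless contains a fixed critical point, a contradiction; and the reduction of arbitrary Thurston obstructions to (degenerate) Levy cycles is legitimately available here because every critical point of $F^2$ eventually lands on a fixed critical point (the Berstein--Levy theorem for topological polynomials), which you should cite explicitly rather than calling it a ``Levy--Berstein-type argument.''

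The genuine gap is the parenthetical ``and thus $F$ is `anti-combinatorially' equivalent.'' Thurston's theorem applied to $F^2$ produces a polynomial Thurston-equivalent to $F^2$; nothing in that statement says this polynomial is the second iterate of an anti-polynomial, nor that the equivalence intertwines the orientation-reversing dynamics, so as written you have realized $F^2$ but not $F$. The standard repair --- and the crux of adapting Thurston's theorem to anti-polynomials --- is to work with the pullback map $\sigma_F$ on the Teichm\"uller space of the sphere marked at the post-critical set: $\sigma_F$ is well defined for orientation-reversing $F$ and satisfies $\sigma_F\circ\sigma_F=\sigma_{F^2}$; unobstructedness together with hyperbolicity of the orbifold (valid for $n\geq 3$, while $n=2$ is handled by hand with $p(z)=z^2$) gives a \emph{unique} fixed point of $\sigma_{F^2}$, and since $\sigma_F$ commutes with $\sigma_{F^2}$ it permutes the fixed points of the latter, hence fixes the unique one, realizing $F$ itself. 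The same uniqueness disposes of your step (b) more cleanly than rigidity-plus-normalization: complex conjugation induces an involution of the marked Teichm\"uller space commuting with $\sigma_F$, so the unique fixed point is symmetric and the realizing anti-polynomial can be taken with real coefficients. Separately, your model construction is under-specified at exactly the combinatorial heart: a ``cut-and-paste surgery splitting the critical point of $\overline{z}^n$'' does not by itself arrange that each of the $n-1$ resulting critical points is \emph{fixed}; Geyer's model places the critical points on the real line and prescribes an explicit zigzag (a piecewise-monotone real map whose turning points lie on the diagonal), and some equally explicit combinatorics is needed before the fixedness claim, and hence the post-critical set you feed into Thurston's theorem, is justified.
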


The original proof of Theorem \ref{Lukas} given in \cite{MR2358495} relies on Thurston's topological characterization of polynomials. Different proofs of Theorem \ref{Lukas} were later given, first in \cite{2014arXiv1411.3415L} using the Krein-Milman Theorem, and later in \cite{MR4273178} using quasiconformal surgery. We mention that \cite{MR4273178} describes all polynomials which satisfy Theorem \ref{Lukas}. 

We give a short proof of Theorem \ref{main_thm} in Section \ref{short_pf} by showing that an appropriate perturbation of the polynomial in Theorem \ref{Lukas} satisfies the conclusions of Theorem \ref{main_thm}. 

\begin{rem}\label{no_pcf_rem} Geyer's examples proving sharpness in Wilmshurst's conjecture are post-critically finite; that is, each critical value has finite forward orbit. The class of post-critically finite maps plays a central role in complex dynamics. In contrast to Geyer's examples, the maps we produce in Theorem \ref{main_thm} proving sharpness in the Bshouty-Hengartner conjecture are not post-critically finite. Moreover, in fact there are no post-critically finite examples satisfying Theorem \ref{main_thm}; see Proposition \ref{no_pcf} for a precise formulation of this fact. This indicates an important difference between the setting of Wilmshurst's conjecture and that of the Bshouty-Hengartner conjecture.
\end{rem}

Nevertheless, by a more careful approach, we are able to prove the following sharper version of Theorem \ref{main_thm}. 

\begin{thm}\label{main_thm2} For each $n>1$, there exists a degree $n$ polynomial $p(z)$ and $c\in\mathbb{C}$ so that  $z\mapsto\overline{c}+1/\overline{p(z)}$ has $n$ attracting fixed points, and the $n-1$ finite critical points of $z\mapsto\overline{c}+1/\overline{p(z)}$ are fixed. 
\end{thm}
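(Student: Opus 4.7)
The strategy is to combine Theorem \ref{main_thm} with a quasiconformal surgery, adapting the approach of \cite{MR4273178} to our anti-rational setting.

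The starting observation is a characterization: an anti-rational self-map $F$ of $\widehat{\mathbb{C}}$ of degree $n$ has the form $\overline{c}+1/\overline{p(z)}$ if and only if $\infty$ is a totally ramified critical point of local degree $n$ with $F(\infty)\in\mathbb{C}$. Indeed, writing the holomorphic conjugate $\overline{F(z)}=A(z)/B(z)$ in lowest terms with $F(\infty)=\overline{c}$, total ramification at $\infty$ forces $A-cB$ to be a nonzero constant $K$, and the asserted form follows after absorbing $K$ into $B$. Crucially, total ramification is a purely local topological condition and is preserved under quasiconformal conjugation followed by a M\"obius normalization sending the ramified point back to $\infty$.

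By Theorem \ref{main_thm}, pick $(p_0,c_0)$ so that $F_0:=\overline{c_0}+1/\overline{p_0(z)}$ is hyperbolic with $n$ attracting fixed points $\alpha_1,\dots,\alpha_n$; let $\zeta_1,\dots,\zeta_{n-1}$ denote the finite critical points of $p_0$. The map $F_0$ has exactly $n$ critical points (the $\zeta_j$'s together with $\infty$), and the standard theory of hyperbolic (anti)rational maps guarantees that each attracting basin contains a critical orbit. After a small perturbation within the hyperbolic component (preserving the $n$ attracting fixed points) we may assume the critical orbits are disjoint; then by pigeonhole each basin contains exactly one critical orbit. Relabel so that $\zeta_j$ lies in the basin of $\alpha_j$ for $1\le j\le n-1$ and $\infty$ lies in the basin of $\alpha_n$.

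I then deform $F_0$ via a quasiconformal surgery supported inside the basins of $\alpha_1,\dots,\alpha_{n-1}$, leaving the basin of $\alpha_n$ and a neighborhood of $\infty$ untouched. In K\"onigs coordinates near each $\alpha_j$ ($j<n$), $F_0$ is conformally conjugate to $w\mapsto\mu_j\bar w$ with $|\mu_j|<1$; replace this germ on a small disk by the super-attracting model $w\mapsto\bar w^2$, interpolating quasiconformally in an intermediate annulus so that the critical point of the new model sits at $\alpha_j$. Pulling back a standard Beltrami coefficient along backward orbits of the resulting quasi-regular map produces an invariant coefficient of uniformly bounded dilatation (each orbit passes through the finitely many interpolation annuli only finitely often before entering a linearization region), and the measurable Riemann mapping theorem produces a quasiconformal $\varphi$ so that $F_1:=\varphi\circ F_0\circ\varphi^{-1}$ is anti-rational of degree $n$. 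The deformation is trivial on a neighborhood of $\infty$, so the total-ramification condition at $\infty$ persists under $\varphi$, and the characterization above (after a M\"obius normalization) identifies $F_1$ with a map $\overline{c_1}+1/\overline{p_1(z)}$. By construction the finite critical points of $p_1$ now coincide with the super-attracting fixed points $\varphi(\alpha_1),\dots,\varphi(\alpha_{n-1})$, while $\varphi(\alpha_n)$ provides the $n$-th attracting fixed point, yielding Theorem \ref{main_thm2}. The main obstacle is ensuring that the deformed map remains in our restricted family; this is the point of performing the surgery entirely within the basins of $\alpha_1,\dots,\alpha_{n-1}$, so that the characterizing condition (total ramification at $\infty$) is automatically preserved.
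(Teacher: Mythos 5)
Your overall strategy---start from Theorem \ref{main_thm} and use quasiconformal surgery to convert the $n-1$ linearly attracting fixed points into superattracting ones---is a legitimate direction, and your characterization of maps of the form $\overline{c}+1/\overline{p(z)}$ (total ramification at $\infty$ with finite image) is correct; it is essentially Lemma \ref{form_of_r_lem} of the paper. However, the central surgery step is flawed as described, and the flaw is topological, not technical. You propose to work in a small disk $D_j$ inside the K\"onigs linearization domain of $\alpha_j$, where $F_0$ is \emph{injective}, and to replace the degree-one germ $w\mapsto \mu_j\overline{w}$ by the degree-two model $w\mapsto\overline{w}^2$, interpolating in an annulus. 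No quasiregular interpolation can exist there: on the outer boundary circle the map winds once around its image, on the inner circle twice, and a degree count rules this out globally. Concretely, choose the disks so that $\widetilde{F}(D_j)\subset D_j$ for both the original map and the models. A generic point near $\alpha_n$ then has exactly $n$ preimages under the surgered map $\widetilde{F}$ (the same as under $F_0$, since no preimage can lie in any $D_j$), while a generic point near $\alpha_j$, $j<n$, has at least $n+1$ preimages ($n-1$ outside the disks where $\widetilde{F}=F_0$, plus \emph{two} inside $D_j$ coming from the quadratic model). Since a quasiregular map of $\widehat{\mathbb{C}}$ is discrete and open, its preimage count of generic points (with consistent orientation sign) is a constant equal to its degree, so $\widetilde{F}$ cannot be quasiregular. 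Put differently: you cannot create a critical point at $\alpha_j$ by a modification supported in a region where $F_0$ has no critical point; the redefinition must absorb the existing critical point $\zeta_j$. The correct version of your surgery (as in \cite[Sec.~4.2]{MR3445628}) replaces the \emph{degree-two proper map} that $F_0$ induces on a disk neighborhood containing both $\alpha_j$ and $\zeta_j$ by the degree-two model $\overline{w}^2$, so that boundary degrees match and the global degree is preserved; your writeup never brings $\zeta_j$ into the surgery region, so the construction collapses at this point. (There is also a small slip later: the straightened map should conjugate the surgered map $\widetilde{F}$, not $F_0$, and you would need $n-1$ separate such surgeries with a uniform dilatation bound, but these are repairable.)

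For comparison, the paper avoids this degree problem by running the surgery in the opposite direction. It starts not from Theorem \ref{main_thm} but from Geyer's polynomial (Theorem \ref{Lukas}), where the $n-1$ finite critical points are \emph{already} fixed, and performs a single surgery inside the immediate basin $\mathcal{A}$ of $\infty$: conjugating by the Riemann map, the dynamics on $\mathcal{A}$ is $z\mapsto z^n$, and this degree-$n$ proper map is replaced by the degree-$n$ Blaschke product $B_\delta=M_\delta(z^n)$. Here the boundary degrees agree ($n$ and $n$), so an unbranched interpolation exists, and the effect is to convert the superattracting fixed point at $\infty$ into a finite, linearly attracting fixed point while $\infty$ remains a critical point of multiplicity $n-1$ with finite image---exactly the form $c+1/p(z)$. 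The price the paper pays, which your disk-supported surgery would have avoided, is that the gluing occurs along the fractal boundary $\partial\mathcal{A}$, requiring a removability argument (hyperbolicity, the John property of $\mathcal{A}$, and Jones' removability theorem). If you want to salvage your direction of attack, you must carry out the Branner--Fagella-style surgery on basin neighborhoods containing the $\zeta_j$; as written, the proof has a genuine gap.
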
 

Theorem \ref{main_thm2} is a sharper version of Theorem \ref{main_thm} in the sense that it achieves the maximal number $n-1$ of critical values having finite forward orbits (the critical point at $\infty$ of a map satisfying Theorem \ref{main_thm} must always have an infinite forward orbit: see Proposition \ref{no_pcf}). 

Our proof of either of Theorems \ref{main_thm}, \ref{main_thm2} may roughly be described as perturbing the superattracting fixed point $\infty$ of the polynomial $p$ of Theorem \ref{Lukas} to a nearby, finite point to create a linearly attracting fixed point near $\infty$: in the proof of Theorem \ref{main_thm} this is done simply by post-composing $p$ with a M\"obius transformation, and in the proof of Theorem \ref{main_thm2} this is done more carefully (so as to preserve the property of $p$ of having $n-1$ fixed critical points) using quasiconformal surgery. Quasiconformal surgery refers to a number of techniques which arose starting in the twentieth century within Geometric Function Theory, Riemann surfaces, and later in Complex Dynamics; we refer to \cite{MR3445628} for a history.

\section{Proof of Theorem \ref{main_thm}}\label{short_pf}

\begin{lem}\label{form_of_r_lem} Let $r$ be a degree $n$ rational map. Then there exists $c\in\mathbb{C}$ and a polynomial $p(z)$ so that $r(z)\equiv c+1/p(z)$ if and only if $r$ has a degree $n-1$ critical point at $\infty$ and $r(\infty)\in\mathbb{C}$. 
\end{lem}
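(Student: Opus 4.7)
The statement is an ``if and only if,'' so the plan is to treat the two directions separately; both ultimately reduce to a local computation at $\infty$ in the chart $w = 1/z$.

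For the forward direction, suppose $r(z) = c + 1/p(z)$ with $\deg p = n$. Evaluating at $\infty$ gives $r(\infty) = c \in \mathbb{C}$ immediately. To read off the critical-point data at $\infty$, I will pass to the coordinate $w = 1/z$: expanding $p(1/w) = a_n w^{-n}(1 + O(w))$, where $a_n$ is the leading coefficient of $p$, shows that $r(1/w) - c$ vanishes to order exactly $n$ at $w = 0$. Hence the local degree of $r$ at $\infty$ equals $n$, which is to say that $\infty$ is a critical point of $r$ of degree $n-1$.

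For the converse, set $c := r(\infty)$, which lies in $\mathbb{C}$ by hypothesis, and define $s(z) := r(z) - c$. Then $s$ is a degree-$n$ rational map with $s(\infty)=0$, and the hypothesis that $\infty$ is a critical point of $r$ of degree $n-1$ translates directly to $s$ having a zero of order exactly $n$ at $\infty$. Writing $s = A/B$ in lowest terms, the conditions $s(\infty)=0$ and $\deg s = n$ force $\deg A < \deg B = n$. A direct expansion of $s(1/w)$ at $w = 0$ gives that the order of vanishing there is $n - \deg A$, so the critical point hypothesis forces $\deg A = 0$. Thus $A$ is a nonzero constant $a$, and setting $p(z) := B(z)/a$ yields the desired representation $r(z) = c + 1/p(z)$.

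I do not expect a substantive obstacle here: the argument amounts to bookkeeping once the dictionary between ``critical point of degree $n-1$ at $\infty$'' and ``order-$n$ zero of $s = r - c$ at $\infty$'' is made explicit. The only mild care needed is to confirm that the vanishing order of $s$ at $\infty$ is exactly $n$ (and not merely $\geq 2$), which follows from the precise meaning of the degree of a critical point as local degree minus one.
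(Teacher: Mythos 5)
Your proof is correct. Both directions check out: in the forward direction the expansion of $p(1/w)$ correctly gives vanishing of order exactly $n$ for $r(1/w)-c$ at $w=0$, and in the converse the identification of the vanishing order of $s(1/w)$ at $w=0$ with $n-\deg A$ (for $s=A/B$ in lowest terms with $\deg B=n$) correctly forces $A$ to be a nonzero constant. The paper's proof of the converse is organized slightly differently: it defines $p:=1/(r-c)$ directly and argues that $p$ is a polynomial because its only pole is at $\infty$, which follows from the preimage count $r^{-1}(c)=\{\infty\}$ — the local degree of $r$ at $\infty$ is $n$, which already exhausts all $n$ preimages of $c$ that a degree $n$ rational map can have. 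Your lowest-terms computation encodes the same fact (the zeros of $A$ are exactly the finite preimages of $c$, and you show there are none), so the two arguments are really two phrasings of one observation; the paper's version is a bit slicker in that it avoids writing $r-c$ as a quotient, while yours is more explicitly computational and also spells out the forward direction, which the paper dismisses as evident.
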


\begin{proof} The $\implies$ direction is evident. For the $\impliedby$ direction, set $c:=r(\infty)$ and consider the rational map 
\begin{equation}\label{rearrange} p(z):= \frac{1}{r(z)-c} \textrm{ for } z\in\Chat. \end{equation}
Since $r$ is assumed to have a degree $n-1$ critical point at $\infty$, we have $r^{-1}(c)=\{\infty\}$ and hence $p^{-1}(\infty)=\{\infty\}$; in other words the only pole of the rational map $p(z)$ is at $\infty$ and hence $p(z)$ is a polynomial. The conclusion follows.
\end{proof}

\begin{notation} Let $p$ be as in Theorem \ref{Lukas}. We let 
\begin{equation} M_\delta(z):= \frac{z+\delta}{1+\delta z} \textrm{ for } \delta \in\mathbb{C},
\end{equation}
and consider the composition
\begin{equation} r_{\delta}(z):=M_\delta\circ p(z). 
\end{equation}
\end{notation}

\begin{prop}\label{fixedpointspreserved} For $\delta$ sufficiently small, the map $z\mapsto\overline{r_{\delta}(z)}$ has $n$ attracting fixed points. 
\end{prop}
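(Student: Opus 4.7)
The plan is to track the $n$ super-attracting fixed points of the unperturbed map $\overline{p(z)}=\overline{r_0(z)}$ --- namely the $n-1$ finite fixed critical points $z_1,\dots,z_{n-1}$ of $\overline{p(z)}$ provided by Theorem \ref{Lukas}, together with $\infty$ --- and show that they deform into $n$ genuine attracting (no longer super-attracting) fixed points of $\overline{r_\delta}$ once $\delta$ is small and nonzero. The first $n-1$ of these will persist by an implicit function theorem argument, while the super-attracting fixed point at $\infty$ (which, for $\delta\neq 0$, is sent by $\overline{r_\delta}$ to the finite point $1/\overline{\delta}$) will be replaced by a new attracting fixed point situated very far out in the plane.

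For the $n-1$ finite fixed points I would introduce $F(z,\delta):=r_\delta(z)-\overline{z}$, note $F(z_j,0)=0$, and verify that the real Jacobian of $F$ in the variable $z$ is non-degenerate at $(z_j,0)$. A short computation identifies this Jacobian with the real-linear map $v\mapsto r_\delta'(z)v-\overline{v}$, whose determinant is $|r_\delta'(z)|^2-1$; at $(z_j,0)$ this equals $-1$ because $p'(z_j)=0$. The implicit function theorem then supplies real-analytic paths $\delta\mapsto\zeta_j(\delta)$ with $\zeta_j(0)=z_j$ solving $r_\delta(\zeta_j)=\overline{\zeta_j}$. The multiplier of such a fixed point of the anti-holomorphic map $\overline{r_\delta}$ is $|r_\delta'(\zeta_j(\delta))|$, which varies continuously in $\delta$ and vanishes at $\delta=0$, so each $\zeta_j(\delta)$ is attracting for all sufficiently small $\delta$.

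To locate the $n$-th fixed point I would pass to the chart $w=1/z$ at infinity, in which $z\mapsto\overline{r_\delta(z)}$ becomes $\tilde g_\delta(w):=1/\overline{r_\delta(1/w)}$. Using $p(1/w)=a_n w^{-n}+O(w^{-n+1})$ and expanding $M_\delta$ near $\infty$, one finds
\[
\tilde g_\delta(w)=\overline{\delta}+\frac{1-\overline{\delta}^{\,2}}{\overline{a_n}}\,\overline{w}^{\,n}+O(\overline{w}^{\,n+1})
\]
for $w$ near $0$. The point $w=0$ is no longer fixed when $\delta\neq 0$; the candidate surviving fixed point sits at $w\approx\overline{\delta}$. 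I would confirm this by a Banach contraction argument on the closed disk $\overline{B(\overline{\delta},K|\delta|^n)}$: for $K$ large and $|\delta|$ sufficiently small, this disk is mapped into itself by $\tilde g_\delta$, and $\tilde g_\delta$ contracts it with Lipschitz constant $O(|\delta|^{n-1})$. This yields a unique fixed point $w_\ast(\delta)=\overline{\delta}+O(|\delta|^n)$, whose multiplier $|\partial_{\overline w}\tilde g_\delta(w_\ast)|$ is of order $n|\delta|^{n-1}/|a_n|<1$ because $n\geq 2$. The corresponding point $\zeta_n(\delta):=1/w_\ast(\delta)\sim 1/\overline{\delta}$ is an attracting fixed point of $\overline{r_\delta}$, distinct from the bounded $\zeta_j(\delta)$'s.

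The main obstacle is the near-$\infty$ analysis: one cannot apply the implicit function theorem directly at $(w,\delta)=(0,0)$ because the unperturbed fixed point $w=0$ is super-attracting of order $n$, making the linearization of $\tilde g_0(w)-w$ singular there. The key observation is that the surviving fixed point shifts by an amount of size $|\delta|$ (not $|\delta|^n$) --- it is essentially the image of $0$ under $\tilde g_\delta$ --- so the contraction mapping must be centered at $\overline{\delta}$ rather than at $0$. Everything else (distinctness of the $n$ fixed points, verification of the attracting property in a genuine neighborhood) is then routine.
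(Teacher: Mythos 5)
Your argument is correct, but it takes a genuinely different and considerably more computational route than the paper. The paper treats all $n$ fixed points uniformly on the sphere: each attracting fixed point of $\overline{r_0}=\overline{p}$ (including $\infty$) sits at the center of a spherical disc $D$ with $\overline{r_0}(D)$ compactly contained in $D$; this compact containment persists for small $\delta$, giving a fixed point of $\overline{r_\delta}$ in each $D$, which is attracting by the Schwarz lemma. That soft argument completely sidesteps the degeneracy at $\infty$ that you correctly identify as the main obstacle to a naive implicit-function-theorem approach (the fixed point at $\infty$ is superattracting of local degree $n$, so the linearization of $\tilde g_0(w)-w$ at $w=0$ is singular). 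Your workaround --- centering the contraction mapping at $\overline{\delta}\approx\tilde g_\delta(0)$ rather than at $0$ --- is sound, and the exact identity $1/M_\delta(Z)=\delta+(1-\delta^2)/(Z+\delta)$ confirms your expansion of $\tilde g_\delta$, with the error term uniform in small $\delta$. What your route buys is quantitative information the paper's proof does not provide: the $n$-th fixed point is located at $\sim 1/\overline{\delta}$ with multiplier $\sim n|\delta|^{n-1}/|a_n|$, and the $n-1$ perturbed critical fixed points move real-analytically in $\delta$ with multipliers $o(1)$; the price is a two-case analysis in place of one uniform argument. The only points worth making explicit are routine: the implicit function theorem you invoke is the real one for $F:\mathbb{R}^2\times\mathbb{R}^2\to\mathbb{R}^2$ (the map is not holomorphic in $z$ because of the $\overline{z}$ term, but the real Jacobian determinant $|r_\delta'(z)|^2-1=-1$ at $(z_j,0)$ is exactly what is needed), and the fixed point found in the $w=1/z$ chart transfers to an attracting fixed point of $\overline{r_\delta}$ because attraction and the modulus of the multiplier are invariant under the M\"obius change of coordinates.
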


\begin{proof} The map $\overline{r_0}=\overline{p}$ has $n-1$ finite attracting fixed points by Theorem \ref{Lukas}, as well as an attracting fixed point at $\infty$. This means each of these $n$ attracting fixed points is the center of a (spherical) disc, denote it by $D$, so that $\overline{r_0}(D)$ is compactly contained within $D$ (see Lemma 8.1 of \cite{MilnorCDBook}). Thus, for sufficiently small $\delta$, we have that $\overline{r_\delta}(D)$ is still compactly contained within $D$. Thus $\overline{r_\delta}$ has a fixed point (within $\overline{r_\delta}(D)$) near each of the attracting fixed points of $\overline{r_0}=\overline{p}$, and by Schwarz's lemma each of these $n$ fixed points of $\overline{r_0}$ must be attracting. 
\end{proof}

Theorem \ref{main_thm} now follows from Lemma \ref{form_of_r_lem} and Proposition \ref{fixedpointspreserved}. Indeed, for sufficiently small $\delta\not=0$ we have that $r_{\delta}(z)$ has a degree $n-1$ critical point at $\infty$ with $r_\delta(\infty)=1/\delta\in\mathbb{C}$, so that by Lemma \ref{form_of_r_lem} we have $r_\delta(z)=c+1/p_\delta(z)$ for some polynomial $p_\delta(z)$. Moreover, for $\delta\not=0$ sufficiently small, $z\mapsto \overline{r_\delta(z)}$ also has $n$ attracting fixed points by Lemma \ref{form_of_r_lem}.

\vspace{2.5mm}

\section{Proof of Theorem \ref{thm:sharp}}\label{sec:reformulate}

 Here we finish resolving the Bshouty-Hengartner conjecture by arriving at Theorem \ref{thm:sharp} as an application of Theorem \ref{main_thm} and the generalized argument principle for complex harmonic functions.

Fix $n > 1$. By Theorem~\ref{main_thm}, there exists a degree $n$ polynomial $p(z)$ and a complex number $c \in \mathbb{C}$ such that the anti-rational map
\[
z \mapsto \overline{r(z)}, \quad \text{where } r(z) := c + \frac{1}{p(z)},
\]
has exactly $n$ attracting fixed points.

The fixed point equation $\overline{r(z)} = z$ can be rearranged as $p(z)\ol{(z - \overline{c})} = 1$ which is of the desired form $p(z)\overline{q(z)} = w$, with $q$ linear.

To count the total number of solutions, we further note that solutions to $p(z)\overline{q(z)} = 1$ are zeros of the (complex) harmonic function $H(z) := z - \ol{c} - \frac{1}{\overline{p(z)}}$ (a sum of analytic and anti-analytic functions is appropriately called harmonic as each of its real and imaginary components are real harmonic functions).  Note that the Jacobian of $H$ is $$J_H(z) = |H_z|^2 - |H_{\ol{z}}|^2 = 1 - |r'(z)|^2.$$

We will use a generalization of the argument principle for harmonic functions stated below.

First recall that a zero \( z_0 \) of \( H \) is \emph{singular} if the Jacobian $J_H(z)$
vanishes at \( z_0 \). Otherwise, \( H \) is \emph{sense-preserving} at \( z_0 \) if \( J_H(z_0) > 0 \), and \emph{sense-reversing} if \( J_H(z_0) < 0 \). The \emph{order} of a sense-preserving zero is the smallest $k > 0$ for which the $k$th derivative of the analytic part is nonzero; sense-reversing zeros are assigned the order of the corresponding sense-preserving zero of \( \overline{H} \). 

Suppose \( H(z) \to \infty \) as \( z \to z_0 \) and $H$ is free of singular zeros.  Then the Jacobian of $H$ is either positive or negative throughout a small punctured neighborhood of $z_0$.  We call \( z_0 \) a \emph{sense-preserving pole} or \emph{sense-reversing pole} accordingly, and in either case the order is defined as $|\frac{1}{2\pi} \Delta_T \arg H|$,
where \( T \) is a small circle around $z_0$.

\begin{rem}\label{rem:pole}
The function \( H(z) = z - \ol{c} - \frac{1}{\overline{p(z)}} \) has poles at the zeros of \( p \), each of which is sense-reversing, and their total order (sum of orders) is \( n \).
\end{rem}

\begin{rem}\label{rem:zeros}
The total order $N_+$ of sense-preserving zeros of $ H(z) = z - \ol{c} - \frac{1}{\overline{p(z)}} $ is $ n$. Indeed the attracting fixed points of $\overline{r(z)}$ correspond to orientation-preserving zeros of $H$ since its Jacobian is $J_H(z) = 1 - |r'(z)|^2$ which is positive precisely when we have $|r'(z)|<1$.
\end{rem}

We now state the generalized argument principle \cite[Thm.~2.2]{ST}:

\begin{lem}[Generalized Argument Principle]\label{lem:argprinc}
Let \( H \) be harmonic in a region \( D \), except for finitely many poles. Let \( C \) be a Jordan curve in \( D \) avoiding the zeros and poles of \( H \), and let \( \Omega \) denote the interior of \( C \). Suppose \( H \) has no singular zeros in \( \Omega \). Let \( N_+, N_- \) be the total orders of sense-preserving and sense-reversing zeros of \( H \) in \( \Omega \), and let \( P_+, P_- \) be the total orders of sense-preserving and sense-reversing poles. Then
\begin{equation}
    \frac{1}{2\pi} \Delta_C \arg H = N_+ - N_- - (P_+ - P_-).
\end{equation}
\end{lem}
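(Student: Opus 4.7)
The plan is to follow the template of the classical (holomorphic) argument principle: reduce the change $\Delta_C \arg H$ to a sum of local winding contributions, one at each zero and pole of $H$ inside $\Omega$, and then read off each contribution from the leading-order behavior of $H$. To carry out the first step, enclose each of the finitely many zeros and poles $z_j$ by a small positively oriented circle $C_j$ bounding a disc $D_j$, with the $D_j$ pairwise disjoint and contained in $\Omega$. On the multiply-connected region $\Omega \setminus \bigcup_j \overline{D_j}$ the map $H$ is continuous and never vanishes or blows up, so $\arg H$ admits a locally single-valued branch, and a standard cut-system decomposition (as in Cauchy's theorem on multiply connected domains) yields
\[
\Delta_C \arg H \;=\; \sum_j \Delta_{C_j} \arg H,
\]
reducing the proof to computing each local winding $n_j := \tfrac{1}{2\pi}\Delta_{C_j}\arg H$.

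Next I evaluate $n_j$ at each zero. Writing $H = h + \overline{g}$ with $h, g$ holomorphic near $z_j$, the condition $H(z_j)=0$ and Taylor expansion give
\[
H(z) \;=\; h'(z_j)(z-z_j) \;+\; \overline{g'(z_j)(z-z_j)} \;+\; O(|z-z_j|^2).
\]
The non-singular hypothesis $J_H(z_j)=|h'(z_j)|^2-|g'(z_j)|^2 \neq 0$ ensures the $\mathbb{R}$-linear map $L(w) := h'(z_j)w + \overline{g'(z_j)w}$ is invertible with $\det L = J_H(z_j)$. A direct computation via the factorization $L(re^{i\theta}) = h'(z_j) r e^{i\theta}\bigl(1 + \tfrac{\overline{g'(z_j)}}{h'(z_j)} e^{-2i\theta}\bigr)$ when $|h'(z_j)|>|g'(z_j)|$, and its symmetric counterpart in the other case, shows the winding of $L$ along a small positively oriented circle is $+1$ when $|h'(z_j)|>|g'(z_j)|$ and $-1$ when $|h'(z_j)|<|g'(z_j)|$. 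Choosing $C_j$ small enough that the $O(|z-z_j|^2)$ remainder is dominated by $L$ on $C_j$, a standard straight-line homotopy through maps into $\mathbb{C}^\ast$ identifies $n_j$ with the winding of $L$, contributing $+1$ per sense-preserving zero and $-1$ per sense-reversing zero, for a total of $N_+ - N_-$.

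For the poles I would perform the analogous Laurent analysis. Writing the polar parts of $h$ and $g$ at $z_j$ as $a(z-z_j)^{-m}+\cdots$ and $b(z-z_j)^{-n}+\cdots$ with $\max(m,n)\geq 1$, one of these two leading terms dominates both $H$ and its gradient throughout a small punctured neighborhood of $z_j$: if the holomorphic term dominates, then $|h'|>|g'|$ there (so $J_H>0$, the sense-preserving case) and the winding of $H$ along $C_j$ agrees with that of $a(z-z_j)^{-m}$, namely $-m$; if instead the anti-holomorphic term dominates, the symmetric argument yields $J_H<0$ (sense-reversing) and winding $+n$. The borderline case $m=n$ requires computing the winding of $ae^{-im\theta}+\overline{b}e^{im\theta}$ directly, which agrees with the Jacobian-sign convention and the paper's definition of order as $|\tfrac{1}{2\pi}\Delta_T \arg H|$. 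Summing gives the pole contribution $-P_+ + P_-$, and combining with the zero contribution produces the claimed identity $\tfrac{1}{2\pi}\Delta_C\arg H = N_+ - N_- - (P_+-P_-)$. The main obstacle is the pole analysis: without a uniform Laurent expansion for harmonic functions one must justify the dominant-term argument across all possible pole configurations and carefully match the sign of the winding with the Jacobian-sign label of \emph{sense-preserving} versus \emph{sense-reversing}.
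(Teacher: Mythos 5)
Your overall strategy (localize the winding via a cut-system decomposition, then compute each local contribution from leading-order behavior) is sound, and your treatment of the zeros is essentially complete: under the no-singular-zeros hypothesis every zero has an invertible linearization $L$ with $\det L = J_H(z_j)\neq 0$, hence order $1$, and your Rouch\'e-type homotopy comparison with $L$ correctly yields $+1$ or $-1$ according to the sign of $J_H$ (modulo the routine remark that non-singular zeros are isolated, so there are only finitely many of them in $\Omega$). Note, however, that the paper does not prove this lemma at all: it quotes it as Theorem 2.2 of Suffridge--Thompson [ST]. So you are attempting a self-contained proof of a cited result, and the proof stands or falls on its own merits.

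The pole analysis, which you yourself flag as ``the main obstacle,'' contains a genuine gap, in two respects. First, your starting point --- that near a pole $H = h + \overline{g}$ with $h, g$ meromorphic --- is not a general fact: a complex-valued harmonic function on a punctured disc has the form $h + \overline{g} + A\log|z-z_j|$, where a priori $h,g$ may even have essential singularities, and one must use the hypothesis $H\to\infty$ to control this. Logarithmic terms do occur at genuine poles: $H(z) = 1/z + \log|z|$ tends to $\infty$ at $0$, has $J_H = r^{-4}(1-r\cos\theta)>0$ (sense-preserving) and winding $-1$, yet is not of your assumed form. Second, the borderline case $m=n$ with $|a|=|b|$ cannot be settled by ``computing the winding of $ae^{-im\theta}+\overline{b}e^{im\theta}$ directly,'' because that leading term then takes values on a single line through $0$ and vanishes at $2m$ angles, so the winding is governed by lower-order terms. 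Worse, in this case the pole need not be classifiable at all: for $H(z) = z^{-2} + z^{-1} + \overline{z^{-2}}$ one checks that $H\to\infty$ at $0$, while $J_H = r^{-5}(4\cos\theta + r)$ takes both signs in every punctured neighborhood of $0$, so this pole is neither sense-preserving nor sense-reversing (its winding is $0$). Thus the dichotomy your argument relies on --- and which the lemma's statement presupposes in defining $P_{\pm}$ --- is not automatic; a complete proof must either derive it from the hypotheses or show such singularities are excluded, and your sketch does neither. For the particular function the paper applies the lemma to, $H(z) = z - \overline{c} - 1/\overline{p(z)}$, every pole has regular analytic part and meromorphic anti-analytic part ($m=0<n$), so your dominant-term argument does cover the application; but as a proof of the lemma as stated, the pole step is incomplete --- which is presumably why the paper defers to [ST, Thm.~2.2] rather than proving it.
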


\begin{rem} In order to apply Lemma \ref{lem:argprinc}, we need to verify that $H$ is free of singular zeros. Indeed, as remarked earlier, zeros of $H$ correspond to fixed points of $z \mapsto \overline{r(z)}$, so a singular zero of $H$ corresponds to a neutral fixed point of $\overline{r(z)}$. In order to employ standard results from holomorphic dynamics, let us consider the second iterate $s(z):=\overline{r\circ \overline{r(z)}}$. A neutral fixed point of $\overline{r}$ is also a neutral fixed point of $s$. Each neutral fixed point of $s$ must either be contained in the closure of the postcritical orbit (in the case of a Cremer point), or else the neutral fixed point is contained in a Siegel disk whose boundary is contained in the closure of the postcritical orbit (see Theorem 11.17 of \cite{MilnorCDBook}). Neither of these cases can occur, since the postcritical orbit of $s$ is contained in the postcritical orbit of $r$, and the postcritical orbit of $r$ is contained in the basins of attraction for the various attracting fixed points of $r$. Thus neither $s$ nor $r$ can have a neutral fixed point, and thus $H$ can not have any singular zeros.
\end{rem}

Noting that $H(z) \sim z$ as $|z| \to \infty$, which implies that the net change in argument along a large circle $\gamma$ is $2\pi$, we apply the generalized argument principle to $H$ to obtain:
\[
1=\frac{1}{2\pi} \Delta_\gamma \arg H(z) = N_+ - N_- - (0 - n),
\]
where we have used Remark \ref{rem:pole} to determine $P_+ = 0$ and $P_- = n$.

Combining this with the key observation in Remark \ref{rem:zeros}, we have
\[
N_- = N_+ + n - 1 = 2n - 1.
\]

So the total number of solutions is $N_+ + N_- = n + (2n - 1) = 3n - 1.$ This completes the proof of Theorem~\ref{thm:sharp}.

\begin{rem}
Concerning the conclusion of Theorem \ref{thm:sharp}, providing the existence of at least one example, we note that one extremal example implies the existence of many; starting with the extremal $H(z) = z - \ol{c} - \frac{1}{\overline{p(z)}}$ from above, and letting the parameter $c$ vary, the set of $c$ for which $H$ still has $3n-1$ zeros is open as follows from \cite[Prop. 3]{BergErem2010}. 
\end{rem}


\section{Proof of Theorem \ref{main_thm2}}

Throughout this section we reserve the terminology \emph{polynomial} for a polynomial in one complex variable $z$. If $p$ is a polynomial, we call $\overline{p}$ an \emph{anti-polynomial}. Similarly, the terms \emph{rational mappings} or \emph{rational functions} will refer to rational functions of one complex variable, and if $r$ is a rational function we call $\overline{r}$ \emph{anti-rational}. 


\begin{lem}\label{branched_cover} For any degree $n$ polynomial $p$ and $c\in\mathbb{C}$, the anti-rational map $\overline{r}:=\overline{c}+1/\overline{p}$ has at most $n$ attracting fixed points.
\end{lem}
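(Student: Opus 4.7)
The plan is to apply the classical Fatou theorem to the holomorphic second iterate $s := \overline{r}\circ\overline{r}$ and then translate the critical point that it produces back into a critical point of $\overline{r}$. Writing $\tilde{r}$ for the rational function obtained from $r$ by conjugating its coefficients, one has $\overline{r(z)} = \tilde{r}(\overline{z})$, so
\[
s(z) \;=\; \tilde{r}(r(z))
\]
is a genuinely holomorphic rational map of degree $n^{2}$.

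First I would observe that an attracting fixed point $z_{0}$ of $\overline{r}$ is also an attracting fixed point of $s$ with the same immediate basin $A^{*}$, and that $A^{*}$ is moreover forward-invariant under $\overline{r}$ itself. The multiplier of $s$ at $z_{0}$ is $|r'(z_{0})|^{2}<1$; the full basins of $\overline{r}$ and $s$ at $z_{0}$ coincide because, by continuity of $\overline{r}$ at $z_{0}$, one has $\overline{r}^{k}(z)\to z_{0}$ iff $\overline{r}^{2k}(z)\to z_{0}$; and $\overline{r}(A^{*})$ is connected, contained in the basin of $z_{0}$, and contains $\overline{r}(z_{0})=z_{0}$, so $\overline{r}(A^{*})\subseteq A^{*}$.

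The key step is then to invoke the classical Fatou theorem for $s$: the immediate basin $A^{*}$ must contain a critical point $z_{c}$ of $s$. Differentiating $s(z)=\tilde{r}(r(z))$ yields $s'(z)=\tilde{r}'(r(z))\,r'(z)$, so either $r'(z_{c})=0$ or $\tilde{r}'(r(z_{c}))=0$. In the first case, $z_{c}$ is itself a critical point of $\overline{r}$ lying in $A^{*}$. In the second case, a direct computation gives $\tilde{r}'(w)=\overline{r'(\overline{w})}$, so $\tilde{r}'(r(z_{c}))=0$ is equivalent to $\overline{r}(z_{c})$ being a critical point of $r$ (equivalently of $\overline{r}$); by forward-invariance of $A^{*}$ under $\overline{r}$, this critical point lies in $A^{*}$. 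Either way, $A^{*}$ contains a critical point of $\overline{r}$.

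I would finish with a count: since $r'(z)=-p'(z)/p(z)^{2}$, the critical points of $r$ (equivalently of $\overline{r}$) consist of the zeros of $p'$ in $\mathbb{C}$ — at most $n-1$ distinct points — together with $\infty$, for at most $n$ distinct critical points in total. As the immediate basins of distinct attracting fixed points are disjoint and each contains a critical point of $\overline{r}$, there are at most $n$ attracting fixed points. The main subtlety is the second case in the Fatou step: the critical point of $s$ produced by the theorem need not itself be a critical point of $\overline{r}$, and the argument relies essentially on the forward-invariance of $A^{*}$ under $\overline{r}$ (not merely under $s$) so that $\overline{r}(z_{c})$, which \emph{is} a critical point of $\overline{r}$, still lies in $A^{*}$.
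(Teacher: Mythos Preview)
Your proof is correct and follows the same strategy as the paper: count that $\overline{r}$ has at most $n$ distinct critical points (the $\le n-1$ zeros of $p'$ together with $\infty$) and then use that each immediate attracting basin must contain one of them. The paper simply asserts this last fact for anti-rational maps in one line, whereas you justify it carefully by passing to the holomorphic second iterate $s=\tilde r\circ r$ and applying the classical Fatou theorem there---a reduction the paper itself employs elsewhere (in Section~\ref{sec:reformulate} and Proposition~\ref{no_pcf}) but omits in this lemma.
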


\begin{proof} Let $\overline{r}$ be as in the statement. Degree $n$ rational (or anti-rational) maps have $2n-2$ critical points (counting multiplicity), and maps of the form $\overline{r}=c+1/\overline{p}$ have a degree $n-1$ critical point at $\infty$. Thus, $\overline{r}$ has at most 
\[ 2n-2-(n-2)=n \]
distinct critical points. Since the basin of attraction of each attracting fixed point of $\overline{r}$ must contain a critical value, $\overline{r}$ has at most $n$ attracting fixed points.
\end{proof}

\begin{rem} The anti-rational map we construct in the proof of Theorem \ref{main_thm2} has $n-1$ fixed critical points, and the remaining attracting fixed point is linearly attracting and contains the critical point $\infty$ in its immediate basin of attraction. 
\end{rem}

\noindent We now turn to the proof of Theorem \ref{main_thm2}; the proof will be broken up into a number of smaller steps.

\begin{notation} For $\delta\in(-1,1)$, we let 
\[ B_\delta(z):= \frac{z^n+\delta}{1+\delta z^n}. \] 
\end{notation}

\begin{rem} The map $B_\delta$ is a Blaschke product; indeed $B_\delta$ is the composition of $B_0(z)=z^n$ with a M\"obius transformation preserving $\mathbb{T}$ and shifting $0$ to $\delta$:
\[ B_\delta(z)=  M_\delta(z^n), \textrm{ where we recall } M_\delta(z):= \frac{z+\delta}{1+\delta z}. \] 
The map $B_\delta$ is also a Blaschke product for non-real $\delta\in\mathbb{D}$, but in the course of proving Theorem \ref{main_thm2} we consider only real $\delta$. 
\end{rem}

\begin{notation} We use the notation
\[ \mathbb{D}(z,r):=\{\zeta\in\mathbb{C}: |\zeta-z|<r\}. \] 
\end{notation}

\begin{definition} We say a function $f$ is \emph{$\mathbb{R}$-symmetric} if and only if $f(\overline{z})=\overline{f(z)}$ for all $z$ in the domain of $f$. 
\end{definition}

\begin{construction}\label{qr_map_construction} We will now construct a quasiregular map $g: \mathbb{D}\rightarrow\mathbb{D}$ as follows. Fix $n>1$, and let $\delta\in\mathbb{R}$ satisfy
\begin{equation}\label{delta_assumption} |\delta|< \frac{n-1}{n+1}.
\end{equation}
The Blaschke product $B_\delta$ maps $\mathbb{D}(0,r)$ onto a hyperbolic disc centered at $\delta$; denote this hyperbolic disc by $D_\delta$. A brief calculation shows that for $r$ close enough to $1$ (depending on $\delta$), the assumption (\ref{delta_assumption}) implies 
\begin{equation}\label{inclusions} D_\delta\subset\mathbb{D}(0,r)\subset\mathbb{D}(0,\sqrt[n]{r}).
\end{equation} 
We henceforth fix such an $r$ such that (\ref{inclusions}) holds (we remark that for the purposes of proving Theorem \ref{main_thm2}, we need only consider $\delta$ arbitrarily close to $0$). Let 
\[ A:=\{z : r<|z|<\sqrt[n]{r}\}.\]
Note that $B_0: \sqrt[n]{r}\mathbb{T} \rightarrow r\mathbb{T}$ winds $n$ times around its image, and $B_\delta: r\mathbb{T} \rightarrow \partial D_\delta$ also winds $n$ times around its image. Thus (\ref{inclusions}) implies that there exists a smooth interpolation 
\begin{equation}\label{hdefn} h: A \rightarrow \{ z : |z| < r \textrm{ and } z\not\in D_\delta\}
\end{equation}
 so that $h|_{\sqrt[n]{r}\mathbb{T}}=B_0$ and $h|_{r\mathbb{T}}= B_\delta$. Moreover, since $B_0$ and $B_\delta$ are $\mathbb{R}$-symmetric, the interpolation $h$ may also be chosen to be $\mathbb{R}$-symmetric. Extend $h$ to a piecewise-defined mapping as follows:
\begin{equation}\label{g_defn} g(z):=\begin{cases} B_0(z)=z^n & \sqrt[n]{r} <|z|<1 \\ h(z) & z\in A, \\  B_\delta(z) & |z|<r.  \end{cases}
\end{equation}
\end{construction}

\begin{prop}\label{linearlyattpt} The map $g$ has a linearly attracting fixed point in $D_\delta\cap\mathbb{R}$. 
\end{prop}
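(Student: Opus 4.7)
The plan is to locate the desired fixed point inside the region where (\ref{g_defn}) simplifies: on $\mathbb{D}(0,r)$ we have $g = B_\delta$, so it suffices to exhibit a linearly attracting fixed point of $B_\delta$ in $D_\delta \cap \mathbb{R}$. By (\ref{inclusions}), $B_\delta(\mathbb{D}(0,r)) = D_\delta \subset \mathbb{D}(0,r)$, so $B_\delta$ restricts to a holomorphic self-map of $\mathbb{D}(0,r)$. For $r$ chosen sufficiently close to $1$ (still compatible with (\ref{inclusions})), one in fact has the compact containment $\overline{D_\delta} \subset \mathbb{D}(0,r)$, which is the main technical point of the argument and follows from an elementary hyperbolic-geometry computation using the bound $|\delta|<(n-1)/(n+1)$.

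Granted this compact containment, iterating the Schwarz--Pick lemma shows that $B_\delta$ is a uniform contraction in the hyperbolic metric on $\mathbb{D}(0,r)$, and hence admits a unique attracting fixed point $z_0 \in \mathbb{D}(0,r)$. This fixed point automatically lies in $D_\delta$, since $z_0 = B_\delta(z_0) \in B_\delta(\mathbb{D}(0,r)) = D_\delta$. To see $z_0$ is real, note that because $\delta \in \mathbb{R}$ the map $B_\delta$ has real coefficients and is therefore $\mathbb{R}$-symmetric; thus $\overline{z_0}$ is also an attracting fixed point in $\mathbb{D}(0,r)$, and the uniqueness above forces $z_0 = \overline{z_0} \in \mathbb{R}$.

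Finally, to verify that $z_0$ is linearly (rather than super-) attracting, a direct computation gives
\[
B_\delta'(z) = \frac{n z^{n-1}(1-\delta^2)}{(1+\delta z^n)^2},
\]
which vanishes only at $z=0$. Since $B_\delta(0)=\delta\not=0$, the fixed point $z_0$ is nonzero, so $B_\delta'(z_0)\not=0$; together with $|B_\delta'(z_0)|<1$ (automatic from the attracting property), this shows $z_0$ is linearly attracting. The only real obstacle in this outline is confirming the compact containment $\overline{D_\delta}\subset\mathbb{D}(0,r)$; once that is in hand, the remaining steps are standard facts from hyperbolic dynamics.
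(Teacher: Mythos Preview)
Your argument is correct and essentially matches the paper's: both reduce to $g=B_\delta$ on $\mathbb{D}(0,r)$, use the compact containment coming from (\ref{inclusions}) together with Schwarz--Pick to produce an attracting fixed point in $D_\delta$, and rule out superattraction via $B_\delta'(z)=0\Leftrightarrow z=0$ while $B_\delta(0)=\delta\neq 0$. The only cosmetic difference is that the paper obtains $z_0\in\mathbb{R}$ by applying the contraction directly to the real interval $D_\delta\cap\mathbb{R}$, whereas you deduce reality from uniqueness plus $\mathbb{R}$-symmetry.
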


\begin{proof} We have that $g(D_\delta\cap\mathbb{R})$ is compactly contained in $D_\delta\cap\mathbb{R}$, so $g$ has a fixed point $z_0$ within $D_\delta\cap\mathbb{R}$.  The Schwarz-Pick Lemma implies that $z_0$ must be attracting.  Moreover, we have $g'(z_0)\not=0$, so that $z_0$ is linearly attracting and not superattracting.  Indeed, the only critical point of $g$ is $0$, but $z_0 \neq 0$ since $z_0$ is fixed by $g$ and $g(0)=\delta\not=0$. 
\end{proof}

\begin{definition} A \emph{Beltrami coefficient} on a domain $\Omega\subseteq\Chat$ is a measurable function $\mu\in L^\infty(\Omega)$ satisfying $||\mu||_{L^\infty(\Omega)}<1$. Given a Beltrami coefficient $\mu$ on $\Omega$ and a quasiregular map $g: U \rightarrow \Omega$, the \emph{pullback} of $\mu$ under $g$ is defined as the following Beltrami coefficient on $U$:
\[ g^*\mu:=\frac{g_{\overline{z}}+\mu\circ g \cdot\overline{g_{z}}}{g_{z}+\mu\circ g \cdot g_{\overline{z}}}. \]
\end{definition}

\begin{definition}\label{mu_defn} We will now define a Beltrami coefficient $\mu$ on $\mathbb{D}$ using the map $g: \mathbb{D}\rightarrow\mathbb{D}$ of Construction \ref{qr_map_construction}. First, we set 
\[ \mu(z):=0 \textrm{  for } |z|<r.\]
Next, recalling that $h(A)\subset r\mathbb{D}$ by (\ref{hdefn}), we set 
\[ \mu(z):=h^*\mu(z) \textrm{ for } z\in A. \]
Lastly, we continue to pull back the definition of $\mu$ (already defined for $|z|<\sqrt[n]{r}$) under $g:=B_0$ in $\sqrt[n]{r}<|z|<1$, namely denoting $g^{d}:=g\circ...\circ g$ the $d^{\textrm{th}}$ iterate of $g$, we set:
\[ \mu(z):=(g^{d})^*\mu(z) \textrm{ for } z\in g^{-d}(A). \] 
\end{definition}

\noindent The key property of $\mu$ is the following.

\begin{prop} The Beltrami coefficient $\mu$ is $g$-invariant, namely $g^*\mu=\mu$. Moreover, $\mu$ is $\mathbb{R}$-symmetric. 
\end{prop}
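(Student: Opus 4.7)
The plan is to verify both statements --- the $g$-invariance $g^*\mu=\mu$ and the $\mathbb{R}$-symmetry $\mu(\overline{z})=\overline{\mu(z)}$ --- by checking each on the three pieces in Definition \ref{mu_defn}: the disk $\mathbb{D}(0,r)$, the annulus $A$, and the outer region $\{\sqrt[n]{r}<|z|<1\}$. On these pieces $g$ takes the respective simple forms $B_\delta$, $h$, and $B_0(z)=z^n$ from (\ref{g_defn}), and the third piece decomposes (up to the measure-zero circles $\{|z|=r^{1/n^d}\}$) as the disjoint union of the annuli $g^{-d}(A)$ for $d\geq 1$, to be handled by induction on $d$.

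For the $g$-invariance I would argue as follows. On $\mathbb{D}(0,r)$ the map $g=B_\delta$ is holomorphic and sends $\mathbb{D}(0,r)$ onto $D_\delta\subset\mathbb{D}(0,r)$ by (\ref{inclusions}), where $\mu\equiv 0$; substituting into the pullback formula for a holomorphic map then yields $g^*\mu\equiv 0=\mu$. On $A$ the equation $g^*\mu=h^*\mu=\mu|_A$ is the very definition of $\mu$ on $A$. On $g^{-d}(A)$ with $d\geq 1$, the key tool is the contravariant functoriality $(g^d)^*=g^*\circ (g^{d-1})^*$; applying this to the defining identity $\mu|_{g^{-d}(A)}=(g^d)^*\mu|_A$ gives $\mu|_{g^{-d}(A)}=g^*\bigl((g^{d-1})^*\mu|_A\bigr)$, and inductively $(g^{d-1})^*\mu|_A$ coincides with $\mu|_{g^{-(d-1)}(A)}$ (with the case $d=1$ using directly the definition of $\mu$ on $A$). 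This is precisely $g^*\mu=\mu$ on $g^{-d}(A)$.

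For the $\mathbb{R}$-symmetry I would invoke the standard fact that if $f$ is a quasiregular $\mathbb{R}$-symmetric map and $\nu$ is an $\mathbb{R}$-symmetric Beltrami coefficient on its image, then $f^*\nu$ is again $\mathbb{R}$-symmetric. This follows from the pullback formula by substituting $z\mapsto\overline{z}$, using $f(\overline{z})=\overline{f(z)}$ together with the derivative identities $f_z(\overline{z})=\overline{f_{\overline{z}}(z)}$ and $f_{\overline{z}}(\overline{z})=\overline{f_z(z)}$, and invoking $\nu(\overline{w})=\overline{\nu(w)}$. The base case $\mu\equiv 0$ on $\mathbb{D}(0,r)$ is trivially symmetric; since $h$ and $B_0$ are $\mathbb{R}$-symmetric by Construction \ref{qr_map_construction}, the pullbacks $\mu|_A=h^*\mu$ and $\mu|_{g^{-d}(A)}=(g^d)^*\mu|_A$ inherit the symmetry, by induction on $d$ in the latter case.

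I expect the only delicate step to be the bookkeeping on the nested annuli $g^{-d}(A)$: one must carefully identify which piece of the already-defined $\mu$ is being pulled back at each stage, and invoke the pullback chain rule to match a single application of $g^*$ against the iteratively defined $\mu$. Everything else in the argument reduces to direct substitution into the pullback formula and the piecewise definitions of $g$ and $\mu$.
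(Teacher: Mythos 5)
Your argument is correct and is essentially the paper's own proof, which compresses the same piecewise verification (inner disc where $\mu\equiv 0$ and $g$ is holomorphic, the annulus $A$ where invariance is definitional, and the pulled-back annuli $g^{-d}(A)$ handled via contravariance of the pullback) and the symmetry statement into two sentences. One small slip: for an $\mathbb{R}$-symmetric quasiregular $f$ the correct Wirtinger identities are $f_z(\overline{z})=\overline{f_z(z)}$ and $f_{\overline{z}}(\overline{z})=\overline{f_{\overline{z}}(z)}$ (yours are swapped, as the example $f(z)=\overline{z}$ shows); with the corrected identities the substitution you describe does yield the $\mathbb{R}$-symmetry of $f^*\nu$, so the rest of the argument is unaffected.
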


\begin{proof} The fact that $\mu$ is $g$-invariant follows from Definition \ref{mu_defn}; indeed for $|z|<r$ we have that $g^{d}(z)\subset D_\delta$ for all $d$, and for $|z|>r$ the definition of $\mu$ was given so that the relation $g^*\mu=\mu$ is satisfied. The $\mathbb{R}$-symmetry of $\mu$ follows from the $\mathbb{R}$-symmetry of $h$ and hence $g$. 
\end{proof}

Let $p$ be as in Theorem \ref{Lukas}; for the purposes of proving Theorem \ref{main_thm2} it suffices to know there exists a real polynomial $p$ satisfying Theorem \ref{Lukas}, and in particular that $p$ is $\mathbb{R}$-symmetric. Denote by $\mathcal{A}$ the immediate basin of attraction for $\infty$ of $p$, namely $\mathcal{A}$ is the connected component of $\{z \in \Chat: p^d(z)\xrightarrow{d\rightarrow\infty}\infty\}$ containing $\infty$. There exists a Riemann (conformal) map $\phi: \mathcal{A} \rightarrow \mathbb{D}$, which we may normalize to be $\mathbb{R}$-symmetric and so that $\phi(\infty)=0$. The map $p: \mathcal{A} \rightarrow \mathcal{A}$ is proper, and hence the map $\phi \circ p\circ\phi^{-1}: \mathbb{D} \rightarrow \mathbb{D}$ is also proper and thus a Blaschke product. Since $\phi \circ p\circ\phi^{-1}$ is moreover of degree $n$ and has a unique fixed critical point at $0$, we conclude 
\begin{equation}\label{conjugacy'} \phi \circ p\circ\phi^{-1}(z) =\alpha z^n \textrm{ for } z\in\mathbb{D} \textrm{ and } |\alpha|=1. \end{equation}
Since $\phi$, $p$ are real, we conclude that $\alpha=\pm1$. In what follows, we will assume $\alpha=1$:
\begin{equation}\label{conjugacy} \phi \circ p\circ\phi^{-1}(z) =z^n \textrm{ for } z\in\mathbb{D}.
 \end{equation}
 
 \begin{rem} If $\alpha=-1$ in (\ref{conjugacy'}) one proceeds by adjusting the definition (\ref{g_defn}) of $g$ to interpolate between $-z^n$ and $(-z^n+\delta z)/(1-\delta z^n)$ rather than between $z^n$ and $(z^n+\delta z)/(1+\delta z^n)$. We assume $\alpha=1$ in what follows to simplify the exposition.
 \end{rem}



\begin{definition} We define a mapping $G: \Chat\rightarrow\Chat$ as follows. Recalling the definition of $g: \mathbb{D}\rightarrow\mathbb{D}$ from Construction \ref{qr_map_construction}, we set:
\begin{equation}G(z):=\begin{cases} p(z) & z\not\in \mathcal{A}, \\ \phi^{-1} \circ g \circ \phi & z\in \mathcal{A}.  \end{cases}
\end{equation}
\end{definition}

\begin{prop}\label{removabilityresult} The mapping $G: \Chat\rightarrow\Chat$ is quasiregular and $\mathbb{R}$-symmetric. 
\end{prop}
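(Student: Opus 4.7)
The plan is to verify the two properties separately, with both reducing to the fact that the surgery was designed to match $p$ smoothly along $\partial\mathcal{A}$. The $\mathbb{R}$-symmetry of $G$ is essentially a bookkeeping check: $p$ is $\mathbb{R}$-symmetric by the real normalization in Theorem \ref{Lukas}; $\phi$ (and hence $\phi^{-1}$) is $\mathbb{R}$-symmetric by construction; and $g$ is $\mathbb{R}$-symmetric because $B_0(z)=z^n$ and $B_\delta(z)=(z^n+\delta)/(1+\delta z^n)$ (with $\delta\in\mathbb{R}$) are clearly $\mathbb{R}$-symmetric and $h$ was chosen $\mathbb{R}$-symmetrically in Construction \ref{qr_map_construction}. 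Composing $\mathbb{R}$-symmetric maps yields an $\mathbb{R}$-symmetric map, so both pieces in the definition of $G$ are $\mathbb{R}$-symmetric.

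For quasiregularity, I would first observe that $G$ is quasiregular on each of the two open pieces of its piecewise definition. On $\widehat{\mathbb{C}}\setminus\overline{\mathcal{A}}$, $G=p$ is holomorphic, hence quasiregular. On $\mathcal{A}$, $G=\phi^{-1}\circ g\circ \phi$ is a composition of a conformal map, a quasiregular map (namely $g$, which is quasiregular on $\mathbb{D}$ as a smooth interpolation between two holomorphic maps over the compact annulus $A$), and another conformal map, hence quasiregular. The only real content is the behavior across $\partial\mathcal{A}$.

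The key observation I would then make is that $G$ agrees with $p$ on an open neighborhood of $\partial\mathcal{A}$ in $\widehat{\mathbb{C}}$. Indeed, for $w\in\mathcal{A}$ with $|\phi(w)|>\sqrt[n]{r}$, the definition (\ref{g_defn}) gives $g(\phi(w))=\phi(w)^n$, so by the conjugacy (\ref{conjugacy}),
\begin{equation}
G(w)\;=\;\phi^{-1}\bigl(\phi(w)^n\bigr)\;=\;\phi^{-1}\bigl(\phi(p(w))\bigr)\;=\;p(w).
\end{equation}
Thus $G=p$ on the annular collar $U:=\phi^{-1}\bigl(\{\sqrt[n]{r}<|z|<1\}\bigr)\subset \mathcal{A}$, and $G=p$ on $\widehat{\mathbb{C}}\setminus\mathcal{A}$ by definition. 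To promote this to an open neighborhood of $\partial\mathcal{A}$, I would use the standard fact that if $w_k\in\mathcal{A}$ converges to a point of $\partial\mathcal{A}$ then $|\phi(w_k)|\to 1$ (otherwise a subsequence of $\phi(w_k)$ would converge in $\mathbb{D}$, forcing $w_k$ to converge into $\mathcal{A}$ via the homeomorphism $\phi^{-1}$). Consequently, for any $z_0\in\partial\mathcal{A}$ there is a Euclidean neighborhood $V$ of $z_0$ with $V\cap\mathcal{A}\subset U$, so $G\equiv p$ on $V$; in particular $G$ is holomorphic there.

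Finally, I would assemble these observations: $G$ is continuous on $\widehat{\mathbb{C}}$ (holomorphic across $\partial\mathcal{A}$, quasiregular on the open set $\mathcal{A}$, holomorphic on $\widehat{\mathbb{C}}\setminus\overline{\mathcal{A}}$), and has locally bounded distortion since the distortion of $G$ on $\mathcal{A}$ coincides (via the conformal conjugacy by $\phi$) with that of $g$, which is bounded (the only nonconformal region of $g$ is the compact annulus $A$ where $h$ is smooth and has nonvanishing Jacobian). A standard removability argument (the boundary $\partial\mathcal{A}$ is a compact set across which $G$ extends continuously while being quasiregular on each side with a common bound on distortion) then yields that $G$ is quasiregular globally. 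The only mild subtlety, and the one I would double-check most carefully, is this continuous matching across $\partial\mathcal{A}$ in the absence of a continuous boundary extension of $\phi$ — but the conjugacy (\ref{conjugacy}) combined with the choice $g=B_0$ in the outer annulus sidesteps this entirely, since $G$ literally equals $p$ near $\partial\mathcal{A}$ without invoking any boundary values of $\phi$.
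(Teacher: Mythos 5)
Your proof is correct, and it takes a genuinely different---and more elementary---route than the paper's. The paper only uses the matching of the two pieces of $G$ along $\partial\mathcal{A}$ itself (via (\ref{conjugacy}) and $g(z)=z^n$ on $\mathbb{T}$), and is therefore forced to confront the delicate question of whether $\partial\mathcal{A}$ is removable for quasiregular maps; it settles this by noting that Geyer's polynomial is hyperbolic, so $\mathcal{A}$ is a John domain \cite{MR1230383}, and then invoking Jones' removability theorem \cite{MR1315551}. Your key observation---that $g\equiv z^n$ on the entire outer collar $\{\sqrt[n]{r}<|z|<1\}$, so that by (\ref{conjugacy}) the map $G$ literally coincides with the holomorphic map $p$ on an open neighborhood of $\partial\mathcal{A}$ (the complement of the compact set $\phi^{-1}(\overline{\mathbb{D}(0,\sqrt[n]{r})})\subset\mathcal{A}$)---makes that machinery unnecessary: $G$ is quasiregular on each member of an open cover of $\Chat$ (holomorphic near $\partial\mathcal{A}$ and off $\mathcal{A}$, quasiregular on $\mathcal{A}$ with dilatation equal to that of $g$), and $K$-quasiregularity with a uniform $K$ is a local property. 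Your collar argument via the compactness of $\phi^{-1}(\overline{\mathbb{D}(0,\sqrt[n]{r})})$ in $\mathcal{A}$ is also correct. The trade-off: the paper's route is the robust, general-purpose one (it would survive a surgery glued only along $\partial\mathcal{A}$, with no holomorphic collar), while yours exploits the specific design of Construction \ref{qr_map_construction} to stay self-contained, avoiding hyperbolicity, John domains, and removability results altogether.

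Two small points to tighten. First, your penultimate sentence appeals to ``a standard removability argument'' for a compact set across which $G$ is continuous and quasiregular on both sides with a common distortion bound; as stated for an arbitrary compact boundary this is \emph{not} a standard fact---it is exactly the nontrivial issue the paper addresses---and you do not need it: since $G\equiv p$ on an open neighborhood of $\partial\mathcal{A}$, locality of quasiregularity already finishes the proof, as your final sentence correctly indicates. You should phrase the conclusion that way rather than as a removability statement. Second, for the $\mathbb{R}$-symmetry, add the one-line remark that $\mathcal{A}$ is invariant under complex conjugation (because $p$ is real, so the basin of $\infty$ and its component containing $\infty$ are conjugation-symmetric); this is what guarantees that $z$ and $\overline{z}$ fall under the same clause of the piecewise definition of $G$, so that the symmetry of $p$, $\phi$, and $g$ can be applied piecewise.
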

\begin{proof} The mapping $G$ is holomorphic outside of $\mathcal{A}$, and quasiregular in $\mathcal{A}$, and the  piecewise definition matches up on $\partial\mathcal{A}$ by (\ref{conjugacy}) and since $g(z)=z^n$ for $z\in\mathbb{T}$. Thus the conclusion will follow once we show that $\partial\mathcal{A}$ is removable for quasiregular mappings. To this end, we remark that all of the critical points of $p$ are either fixed or else lie in a $2$-cycle (see Section 2 of \cite{MR2358495}) and thus $p$ is hyperbolic; that is, all critical points of $p$ lie in attracting basins. Thus $\mathcal{A}$ is a John domain (see, for instance, Theorem 3.1 of \cite{MR1230383}), and then \cite{MR1315551} (either Theorem 1 or Corollary 2 of that paper) imply that $\partial \mathcal{A}$ is removable for quasiregular mappings. We direct the interested reader to \cite{MR3429163} for a broad overview of questions regarding removability properties of Julia sets.
\end{proof}

\begin{prop}\label{Glinattr} The map $G$ has a linearly attracting fixed point in $\mathcal{A}\cap\mathbb{R}$. 
\end{prop}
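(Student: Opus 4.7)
The plan is to transport the linearly attracting fixed point of $g$ provided by Proposition \ref{linearlyattpt} to a fixed point of $G$ via the conformal conjugacy $\phi$. First, I would let $w_0 \in D_\delta \cap \mathbb{R}$ be the linearly attracting fixed point of $g$ supplied by Proposition \ref{linearlyattpt}, and set $z_0 := \phi^{-1}(w_0)$. Since $\phi$ (and hence $\phi^{-1}$) was normalized to be $\mathbb{R}$-symmetric and $w_0 \in \mathbb{R} \cap \mathbb{D}$, this places $z_0 \in \mathcal{A} \cap \mathbb{R}$. To verify that $z_0$ is fixed by $G$, since $z_0 \in \mathcal{A}$ the piecewise definition of $G$ gives
\[ G(z_0) = \phi^{-1}\bigl(g(\phi(z_0))\bigr) = \phi^{-1}(g(w_0)) = \phi^{-1}(w_0) = z_0. \]

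The key observation is that although $g$ is only quasiregular on $\mathbb{D}$ globally, it is genuinely holomorphic in a neighborhood of $w_0$: by the inclusion (\ref{inclusions}) we have $w_0 \in D_\delta \subset \mathbb{D}(0,r)$, and the piecewise definition (\ref{g_defn}) sets $g = B_\delta$ on $\mathbb{D}(0,r)$. Consequently $G = \phi^{-1} \circ g \circ \phi$ is holomorphic in a neighborhood of $z_0$ in $\mathcal{A}$. The chain rule applied at $z_0$, combined with $(\phi^{-1})'(w_0) = 1/\phi'(z_0)$, then yields $G'(z_0) = g'(w_0)$. Since Proposition \ref{linearlyattpt} furnishes $0 < |g'(w_0)| < 1$ (attracting by Schwarz-Pick, non-zero derivative by the critical point analysis), we conclude $0 < |G'(z_0)| < 1$, so $z_0$ is a linearly attracting fixed point of $G$ lying in $\mathcal{A} \cap \mathbb{R}$.

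There is no serious obstacle here; the proposition is essentially a transport-of-structure argument across a conformal conjugacy. The only point meriting modest care is recognizing that, near $w_0$, the quasiregular map $g$ coincides with the holomorphic Blaschke product $B_\delta$, so that the multiplier of $G$ at $z_0$ is legitimately computed by the ordinary chain rule and agrees with $g'(w_0)$; the $\mathbb{R}$-symmetry of $\phi$ then guarantees the fixed point lies on the real axis, as claimed.
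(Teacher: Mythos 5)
Your proposal is correct and follows essentially the same route as the paper: transport the fixed point $w_0$ of $g$ via $\phi^{-1}$, use $\mathbb{R}$-symmetry of $\phi$ to keep it on the real axis, and compute $G'(\phi^{-1}(w_0)) = g'(w_0)$ by the chain rule. Your explicit observation that $g$ coincides with the holomorphic Blaschke product $B_\delta$ near $w_0$, so the multiplier computation is legitimate, is a nice point the paper leaves implicit.
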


\begin{proof} By Proposition \ref{linearlyattpt}, $g$ has a linearly attracting fixed point in $D_\delta\cap\mathbb{R}$, call it $z_0$. Thus $\phi^{-1}(z_0)\in\mathcal{A}\cap\mathbb{R}$ is evidently a fixed point of $G$, and we readily check using the chain rule that
\begin{gather}\nonumber G'(\phi^{-1}(z_0))=(\phi^{-1})'(g\circ\phi(\phi^{-1}(z_0)))\cdot g'(\phi(\phi^{-1}(z_0)))\cdot \phi'(\phi^{-1}(z_0))=\\ \nonumber (\phi^{-1})'(z_0)\cdot g'(z_0)\cdot \phi'(\phi^{-1}(z_0)) = g'(z_0), \end{gather}
so that $\phi^{-1}(z_0)$ is a linearly attracting fixed point of $G$. 
\end{proof}

\begin{definition} We define a Beltrami coefficient $\nu$ on $\Chat$ as follows. Recall Definition \ref{mu_defn} of the Beltrami coefficient $\mu$ on $\mathbb{D}$. We set
\[ \nu:=\phi^*\mu \textrm{ on } \mathcal{A},  \] 
\[ \nu:=(G^d)^*(\nu|_{\mathcal{A}}) \textrm{ on } G^{-d}(\mathcal{A}), \textrm{ and }  \] 
$\nu:=0$ outside of the basin of attraction for $\infty$.
\end{definition}

\begin{prop}\label{Ginvariance} The Beltrami coefficient $\nu$ is $G$-invariant and $\mathbb{R}$-symmetric.
\end{prop}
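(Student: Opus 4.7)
The plan is to check $G$-invariance and $\mathbb{R}$-symmetry separately on each of the three pieces used to define $\nu$, exploiting the chain rule for Beltrami pullbacks $(f\circ h)^* = h^*\circ f^*$ together with the $g$-invariance of $\mu$ established in the preceding proposition and the $\mathbb{R}$-symmetry of $\mu$, $\phi$, and $p$. The verification is essentially a bookkeeping exercise that is built into the inductive definition of $\nu$.

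First I would handle the case $z\in\mathcal{A}$. Since $G=\phi^{-1}\circ g\circ\phi$ on $\mathcal{A}$, the intertwining identity $\phi\circ G = g\circ\phi$ holds there, and the chain rule yields
\[ G^*\nu = G^*(\phi^*\mu) = (\phi\circ G)^*\mu = (g\circ\phi)^*\mu = \phi^*(g^*\mu) = \phi^*\mu = \nu, \]
where the final step invokes the $g$-invariance of $\mu$.

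Next, for $z$ in the filled Julia set of $p$ (the complement of $\mathcal{A}$), forward invariance of the filled Julia set under $p=G$ places $G(z)$ outside $\mathcal{A}$, whence $\nu(G(z))=0$. Since $G=p$ is holomorphic here we have $G_{\bar z}=0$, and the pullback formula forces $G^*\nu=0=\nu$. For $z\in G^{-d}(\mathcal{A})$ with $d\geq 1$, one has $G(z)\in G^{-(d-1)}(\mathcal{A})$, and the defining formula combined with the chain rule gives
\[ G^*\nu(z) = G^*\bigl((G^{d-1})^*\nu|_{\mathcal{A}}\bigr)(z) = (G^d)^*\nu|_{\mathcal{A}}(z) = \nu(z), \]
so invariance is automatic from the construction in this case.

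Finally, for $\mathbb{R}$-symmetry I would record the general fact that the pullback of an $\mathbb{R}$-symmetric Beltrami coefficient under an $\mathbb{R}$-symmetric map is again $\mathbb{R}$-symmetric (a direct computation from the pullback formula, using in particular that a holomorphic $\mathbb{R}$-symmetric $\phi$ satisfies $\phi_{\bar z}=0$ and $\phi_z(\bar z)=\overline{\phi_z(z)}$). Since $\mu$ is $\mathbb{R}$-symmetric by the preceding proposition, $\phi$ is $\mathbb{R}$-symmetric by its chosen normalization, and $p$ is real, this symmetry propagates through each of the three defining regions; the zero function is trivially $\mathbb{R}$-symmetric. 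I anticipate no substantive obstacle: the only non-formal inputs are the $g$-invariance and $\mathbb{R}$-symmetry of $\mu$ just established, plus forward invariance of the filled Julia set, which is a standard fact in polynomial dynamics.
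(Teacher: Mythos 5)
Your proposal is correct and follows the same route as the paper, whose proof is just the one-line assertion that the claim follows from the $g$-invariance of $\mu$ and the $\mathbb{R}$-symmetry of $\mu$ and $\phi$; you have simply written out the routine bookkeeping (the intertwining $\phi\circ G=g\circ\phi$ on $\mathcal{A}$, the pullback chain rule, invariance of the complement of the basin, and the standard fact that pullback by an $\mathbb{R}$-symmetric map preserves $\mathbb{R}$-symmetry). The only cosmetic point is that your identification of the complement of $\mathcal{A}$ with the filled Julia set tacitly uses connectivity of the polynomial basin of infinity, which is standard and harmless here.
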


\begin{proof} This follows from $g$-invariance of $\mu$ and $\mathbb{R}$-symmetry of $\mu$ and $\phi$. 
\end{proof}

\begin{notation} Let $z_0\in \mathcal{A}\cap\mathbb{R}$ denote the linearly attracting fixed point of $G$ from Proposition \ref{Glinattr}. 
\end{notation}

\begin{notation} We denote by $\Phi: \Chat \rightarrow\Chat$ the quasiconformal mapping satisfying $\Phi^*\nu\equiv0$ (the mapping $\Phi$ exists by the Measurable Riemann Mapping Theorem), and we normalize $\Phi$ so that $\Phi$ is $\mathbb{R}$-symmetric (this can be done since $\nu$ is $\mathbb{R}$-symmetric) and $\Phi(\infty)=\infty$, $\Phi(z_0)=z_0$. We define
\begin{equation}\label{r_defn} r:=\Phi\circ G\circ\Phi^{-1}: \Chat\rightarrow\Chat
\end{equation}
\end{notation}

\begin{prop}\label{risreal} The mapping $r: \Chat\rightarrow\Chat$ is holomorphic and hence a real rational mapping. 
\end{prop}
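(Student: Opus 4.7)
My plan is to argue, following the classical template for quasiconformal surgery, that $r$ is $1$-quasiregular on $\hat{\mathbb{C}}$. By Weyl's lemma, a $1$-quasiregular map is holomorphic, and since the only holomorphic self-maps of $\hat{\mathbb{C}}$ are rational functions, this will identify $r$ as a rational map. Reality of the coefficients then follows from $\mathbb{R}$-symmetry by a short separate argument.

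For the main step, I would first note that $r$ is quasiregular on $\hat{\mathbb{C}}$ as a composition of the qc maps $\Phi^{\pm 1}$ with the quasiregular map $G$ of Proposition \ref{removabilityresult}. It remains to verify that the Beltrami coefficient of $r$, namely $r_{\bar z}/r_z = r^*0$, vanishes almost everywhere. This is the heart of the surgery argument: $\Phi$ sends the $\nu$-conformal structure to the standard one ($\Phi^*\nu \equiv 0$), $G$ preserves $\nu$ (Proposition \ref{Ginvariance}), and $\Phi^{-1}$ sends the standard structure back to the $\nu$-structure. Using the chain rule $(f \circ g)^*\mu = g^*(f^*\mu)$ for pullbacks of Beltrami coefficients and tracking $\nu$ through the three factors of $r = \Phi \circ G \circ \Phi^{-1}$, one obtains $r^*0 = 0$ almost everywhere. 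Weyl's lemma then promotes $r$ to a holomorphic map on $\hat{\mathbb{C}}$, which is therefore rational.

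For reality, both $\Phi$ and $G$ are $\mathbb{R}$-symmetric (the former by its normalization, the latter by Proposition \ref{removabilityresult}), so $\Phi^{-1}$ and hence $r = \Phi \circ G \circ \Phi^{-1}$ are $\mathbb{R}$-symmetric. A rational function $r$ satisfying $r(\bar z) = \overline{r(z)}$ must have real coefficients, since the map $z \mapsto \overline{r(\bar z)}$ is the rational function obtained from $r$ by conjugating all of its coefficients, and this function agrees with $r$. The main subtlety is the Beltrami-coefficient bookkeeping needed to verify $r^*0 = 0$ within the paper's pullback convention, but this is the standard surgery computation of straightening a quasiregular map along an invariant conformal structure, and the removability result embedded in Proposition \ref{removabilityresult} ensures that the argument applies globally on $\hat{\mathbb{C}}$ rather than only on the complement of $\partial\mathcal{A}$.
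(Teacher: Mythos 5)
Your proposal is correct and takes essentially the same approach as the paper, whose entire proof is the single sentence ``This follows from Proposition \ref{Ginvariance}'' --- i.e.\ exactly the standard straightening argument you spell out (quasiregularity of the composition, the pullback chain rule with $G$-invariance of $\nu$ to get vanishing Beltrami coefficient, Weyl's lemma, and $\mathbb{R}$-symmetry of $\Phi$ and $G$ for reality). Your write-up simply makes explicit the bookkeeping the paper leaves to the reader.
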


\begin{proof} This follows from Proposition \ref{Ginvariance}. 
\end{proof}

\begin{prop}\label{p_exist} There exists a degree $n$ polynomial $p(z)$ and $c\in\mathbb{C}$ so that $r(z)=c+1/p(z)$.
\end{prop}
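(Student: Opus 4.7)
The plan is to verify the two hypotheses of Lemma \ref{form_of_r_lem} for the rational mapping $r$: namely, that $r$ has a degree $n-1$ critical point at $\infty$ and that $r(\infty) \in \mathbb{C}$. By Proposition \ref{risreal} we already know that $r$ is a rational (in fact real rational) mapping. To determine its degree, I will note that the topological degree of $G$ equals $n$ (outside $\mathcal{A}$ the map $G$ agrees with the degree $n$ polynomial $p$; alternatively, $G|_\mathcal{A}$ is conformally conjugate via $\phi$ to the degree $n$ branched covering $g: \mathbb{D}\to\mathbb{D}$). Since $r$ is topologically conjugate to $G$ via the homeomorphism $\Phi$, and $r$ is holomorphic, it follows that $r$ is a rational map of degree $n$.

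Next I will compute $r(\infty)$. Since $\Phi(\infty)=\infty$ and $\infty\in\mathcal{A}$, the definition of $G$ on $\mathcal{A}$ and the normalization $\phi(\infty)=0$ give
\[
r(\infty)=\Phi(G(\infty))=\Phi\bigl(\phi^{-1}(g(0))\bigr)=\Phi\bigl(\phi^{-1}(\delta)\bigr).
\]
Since $\delta\in(-1,1)$, the point $\phi^{-1}(\delta)$ lies in $\mathcal{A}\cap\mathbb{R}$; in particular it is finite, so $\Phi$ maps it to a finite point. Thus $r(\infty)\in\mathbb{C}$.

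To handle the critical point at $\infty$, I will examine the local behavior of $G$ there. Near $w=0$ one has
\[
g(w)=B_\delta(w)=\frac{w^n+\delta}{1+\delta w^n}=\delta+(1-\delta^2)w^n+O(w^{2n}),
\]
so $g$ has local degree $n$ at $0$. Precomposing and postcomposing with the conformal maps $\phi$ and $\phi^{-1}$ (which have local degree $1$ at $\infty$ and $\delta$ respectively) shows that $G$ has local topological degree $n$ at $\infty$. Since $\Phi$ is a quasiconformal homeomorphism, the topological conjugacy $r=\Phi\circ G\circ\Phi^{-1}$ preserves local topological degree, so $r$ has local topological degree $n$ at $\Phi(\infty)=\infty$. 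As $r$ is holomorphic of global degree $n$, this forces $\infty$ to be a critical point of $r$ of multiplicity $n-1$, and in particular $r^{-1}(r(\infty))=\{\infty\}$.

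Having verified both hypotheses, I will invoke the $\impliedby$ direction of Lemma \ref{form_of_r_lem} to conclude that $r(z)=c+1/p(z)$ for some polynomial $p$ (necessarily of degree $n$) and some $c\in\mathbb{C}$. The only conceptual point requiring care is the local degree analysis at $\infty$: one must be careful to distinguish the local topological degree (which is preserved by the quasiconformal change of coordinates $\Phi$) from the analytic multiplicity, and then use holomorphicity of $r$ to convert back. The rest of the argument is a direct computation using the explicit form of $B_\delta$ near the origin together with the normalizations $\Phi(\infty)=\infty$ and $\phi(\infty)=0$.
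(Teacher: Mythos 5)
Your proposal is correct and follows essentially the same route as the paper: both verify the hypotheses of Lemma \ref{form_of_r_lem} by computing $r(\infty)=\Phi\circ\phi^{-1}(\delta)\in\mathbb{C}$ and by transferring the degree $n-1$ critical point of $g$ at $0$ to a degree $n-1$ critical point of $r$ at $\infty$ via the conjugacies $\phi$ and $\Phi$. Your write-up simply fills in the local-degree bookkeeping that the paper compresses into ``one checks,'' including the (correct, and tacitly needed) observation that the degree of $r$ is $n$; just note that finiteness of $\phi^{-1}(\delta)$ uses $\delta\neq 0$, which the construction assumes throughout.
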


\begin{proof} We have that 
\[ r(\infty):=\Phi\circ G\circ\Phi^{-1}(\infty)=\Phi \circ G(\infty) = \Phi \circ \phi^{-1} \circ g \circ \phi (\infty) = \Phi \circ \phi^{-1} \circ g (0) =  \Phi\circ \phi^{-1}(\delta) \in\mathbb{C}. \]
Similarly, one checks that $\infty$ is a degree $n-1$ critical point of $r$ (since $0$ is a degree $n-1$ critical point of $g$). Thus the conclusion follows from Lemma \ref{form_of_r_lem}.
\end{proof} 

\begin{notation} Let $z_1$, ..., $z_{n-1}$ denote the $n-1$ finite, fixed critical points of $\overline{p}$.
\end{notation}

\begin{prop}\label{cfpofr} The points $\Phi(z_1)$, ..., $\Phi(z_{n-1})$ are finite, fixed critical points of $\overline{r}$.
\end{prop}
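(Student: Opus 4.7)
The plan is to verify separately that each $\Phi(z_i)$ is (i) finite, (ii) a critical point of $\overline{r}$, and (iii) fixed by $\overline{r}$. Finiteness will be immediate from $\Phi(\infty)=\infty$ together with the injectivity of $\Phi$.

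The heart of the argument is (ii), and I expect the main technical step to be showing that $\Phi$ is conformal (not merely quasiconformal) on a neighborhood of each orbit $\{z_i,\overline{z_i}\}$. The reason this should be true: by $\mathbb{R}$-symmetry of $p$ together with the defining relations $\overline{p(z_i)}=z_i$ and $p'(z_i)=0$, each real $z_i$ is a superattracting fixed point of $p$, while each non-real $z_i$ lies in a superattracting 2-cycle $\{z_i,\overline{z_i}\}$ of $p$. Since $p$ is hyperbolic (as invoked in Proposition \ref{removabilityresult}) these orbits lie in (super)attracting basins disjoint from $\mathcal{A}$, so there is an open neighborhood $U_i$ of each orbit contained in $\Chat\setminus\bigcup_{d\geq 0}G^{-d}(\mathcal{A})$. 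By the construction of $\nu$ this forces $\nu\equiv 0$ on $U_i$, and the Measurable Riemann Mapping Theorem then yields conformality of $\Phi$ on $U_i$.

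With that in hand, $G\equiv p$ on $U_i$ and is holomorphic there with $G'(z_i)=p'(z_i)=0$. The next step is to apply the ordinary chain rule to $r=\Phi\circ G\circ \Phi^{-1}$ on $\Phi(U_i)$, yielding
\[ r'(\Phi(z_i))\;=\;\Phi'(\overline{z_i})\cdot G'(z_i)\cdot (\Phi^{-1})'(\Phi(z_i))\;=\;0, \]
so $\Phi(z_i)$ is a critical point of $r$, and hence of $\overline{r}$. For (iii), the $\mathbb{R}$-symmetry of $\Phi$ gives
\[ r(\Phi(z_i))\;=\;\Phi(G(z_i))\;=\;\Phi(p(z_i))\;=\;\Phi(\overline{z_i})\;=\;\overline{\Phi(z_i)}, \]
so conjugating yields $\overline{r(\Phi(z_i))}=\Phi(z_i)$.

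The hard part is expected to be the conformality step for $\Phi$: without $\nu$ vanishing on a full open neighborhood of $\{z_i,\overline{z_i}\}$, one would only have quasiconformality at $z_i$, and the ordinary chain rule would not be available to transport $G'(z_i)=0$ into $r'(\Phi(z_i))=0$. This is precisely where the structure of the critical set of $p$ (with all finite critical points lying in attracting basins disjoint from $\mathcal{A}$) plays an essential role.
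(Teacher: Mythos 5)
Your proposal is correct, and for the fixed-point claim it coincides with the paper's proof: both use $p(z_j)=\overline{z_j}$, the fact that $G=p$ off $\mathcal{A}$, and the $\mathbb{R}$-symmetry of $\Phi$ to conclude $r(\Phi(z_j))=\Phi(p(z_j))=\Phi(\overline{z_j})=\overline{\Phi(z_j)}$. Where you go beyond the paper is the criticality claim: the paper's proof addresses only the fixed-point property and leaves the fact that $\Phi(z_j)$ is a critical point implicit, whereas you justify it by noting that each real $z_j$ is a superattracting fixed point of $p$ and each non-real $z_j$ lies in a superattracting $2$-cycle $\{z_j,\overline{z_j}\}$, so these orbits sit in open attracting basins disjoint from the full basin of $\infty$, where $\nu\equiv 0$ by construction; Weyl's lemma then makes $\Phi$ conformal there, and the ordinary chain rule transports $p'(z_j)=0$ to $r'(\Phi(z_j))=0$. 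That argument is valid and is the natural way to fill the gap. Two minor observations: hyperbolicity of $p$ is not actually needed for this step, since fixed or $2$-periodic critical points automatically have superattracting basins, and distinct attracting basins are automatically disjoint from the basin of $\infty$ (which is exactly where $\nu$ may be nonzero); and the conformality step can be bypassed altogether, because $r$ is already known to be holomorphic (Proposition \ref{risreal}) and the local degree at a point is invariant under conjugation by the homeomorphism $\Phi$, so the local degree of $r$ at $\Phi(z_j)$ equals that of $p$ at $z_j$, which is at least $2$, forcing $r'(\Phi(z_j))=0$.
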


\begin{proof} Let $1\leq j \leq n-1$. Since $z_j$ is a fixed point of $\overline{p}$, we have 
\begin{equation}\label{zjreltn} p(z_j)=\overline{z_j},
\end{equation} 
and to show $\Phi(z_j)$ is a fixed point of $\overline{r}$ it suffices to show $r(\Phi(z_j))=\overline{\Phi(z_j)}$. Since $z_j\not\in\mathcal{A}$, we see that
\[ r(\Phi(z_j)):=\Phi\circ G\circ\Phi^{-1}(\Phi(z_j))= \Phi\circ p(z_j), \] 
and by $\mathbb{R}$-symmetry of $\Phi$ and (\ref{zjreltn}), we conclude
\[ \Phi\circ p(z_j) = \Phi(\overline{z_j})=\overline{\Phi(z_j)}, \] 
so that $r(\Phi(z_j))=\overline{\Phi(z_j)}$, as needed.

\end{proof}

\begin{prop}\label{lafpofr} The point $z_0$ is a linearly attracting fixed point of $\overline{r}$.
\end{prop}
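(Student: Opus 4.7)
The plan is to transfer the dynamical properties of $z_0$ under $G$ established in Proposition \ref{Glinattr} to $\overline{r}$ using the quasiconformal conjugacy $\Phi$. First, since $\Phi(z_0)=z_0$ and $G(z_0)=z_0$, the identity $r=\Phi\circ G\circ\Phi^{-1}$ gives $r(z_0)=z_0$; because $z_0\in\mathbb{R}$, we then have $\overline{r}(z_0)=\overline{r(z_0)}=\overline{z_0}=z_0$, so $z_0$ is a fixed point of $\overline{r}$.

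The main task is to upgrade this to linear attraction, and here is where the argument is most delicate, because the quasiconformal homeomorphism $\Phi$ does not preserve complex derivatives. I would argue indirectly via topological invariants. Since $z_0$ is linearly attracting for $G$, it attracts a full open neighborhood, and topological conjugation by the homeomorphism $\Phi$ preserves this property, so $z_0$ attracts a full open neighborhood under $r$ as well. Because $r$ is holomorphic near $z_0$ (Proposition \ref{risreal}) and no neutral fixed point of a rational map---parabolic, Siegel, or Cremer---attracts a full open neighborhood, the multiplier $\lambda := r'(z_0)$ must satisfy $|\lambda|<1$. Moreover, $\Phi$ preserves local degree, and the local degree of $G$ at $z_0$ is one (since $G$ is locally conjugate to the Blaschke product $g$ via the conformal map $\phi$, and $g'(z_0)\neq 0$ in the proof of Proposition \ref{linearlyattpt}); hence $\lambda\neq 0$. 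Altogether, $0<|\lambda|<1$.

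Finally, I would convert linear attraction of $r$ into linear attraction of $\overline{r}$. Since $r$ is $\mathbb{R}$-symmetric (as $G$ and $\Phi$ are), a short computation gives $\overline{r}\circ\overline{r}=r\circ r$, so the holomorphic second iterate $\overline{r}^2=r^2$ has multiplier $\lambda^2$ at the fixed point $z_0$, with $0<|\lambda^2|<1$. This is precisely the statement that $z_0$ is a linearly attracting fixed point of $\overline{r}$. The main obstacle is the middle step: because $\Phi$ is only quasiconformal, producing the inequality $0<|r'(z_0)|<1$ requires combining the topological invariance of attracting neighborhoods and of local degree with the rigidity of neutral fixed points for holomorphic rational maps, rather than a direct derivative computation.
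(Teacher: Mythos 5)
Your proof is correct, but it takes a genuinely different route from the paper's. The paper simply fixes $z_0$ by the conjugacy and normalization $\Phi(z_0)=z_0$ and then computes via the chain rule that $r'(z_0)=\Phi'(z_0)\cdot G'(z_0)\cdot(\Phi^{-1})'(z_0)=G'(z_0)$, concluding from $0<|G'(z_0)|<1$ and the realness of $z_0$ and $r$. The delicacy you flag --- that a quasiconformal $\Phi$ need not preserve derivatives --- is real in general but moot here: the Beltrami coefficient $\nu=\phi^*\mu$ vanishes on the neighborhood $\phi^{-1}(\mathbb{D}(0,r))$ of $z_0$ (since $\mu\equiv 0$ on $\mathbb{D}(0,r)\supset D_\delta$), so by Weyl's lemma $\Phi$ is conformal near $z_0$ and the paper's derivative computation is legitimate; noticing this would shortcut your argument. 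Your workaround is nonetheless sound and more robust, as it uses only topological invariants of the conjugacy: attraction of a full open neighborhood transfers under the homeomorphism $\Phi$, the classification of fixed points of rational maps rules out neutral multipliers (note you should say the degree of $r$ is at least $2$, which holds since $n>1$; for a M\"obius map a parabolic point does attract a neighborhood), and preservation of local degree rules out a superattracting point, giving $0<|r'(z_0)|<1$ without differentiating $\Phi$. Your final passage to $\overline{r}$ via $\overline{r}\circ\overline{r}=r\circ r$ and the multiplier $\lambda^2$ is fine, though the paper's version is more direct: since $z_0$ is real and $r$ is a real rational map, $z_0$ is fixed by $\overline{r}$ and the relevant modulus is $|r'(z_0)|\in(0,1)$. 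In short: same skeleton (transfer from $G$ via $\Phi$, then real symmetry), but you replace the paper's one-line conformal chain-rule computation with a topological-rigidity argument that would still work even if $\Phi$ were not conformal near $z_0$.
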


\begin{proof} Since $\Phi$ is a conjugacy between $r$ and $G$ and $z_0$ is a fixed point of $G$, it is evident that $\Phi(z_0)$ is a fixed point of $r$, and we normalized $\Phi$ so that $\Phi(z_0)=z_0$. Moreover, we check that:
\[ r'(z_0)=\Phi'(z_0)\cdot G'(z_0)\cdot (\Phi^{-1})'(z_0)=G'(z_0), \]
so since $z_0$ is linearly attracting for $G$ the same is true of $r$. Since $z_0$ is real, $z_0$ is also a linearly attracting fixed point of $\overline{r}$.
\end{proof}

\noindent We have now proven Theorem \ref{main_thm2}; indeed first we defined in (\ref{r_defn}) a mapping $r$ and proved in Proposition \ref{p_exist} that there exists a polynomial $p$ and $c\in\mathbb{C}$ so that $r(z)\equiv c+1/p(z)$. Then we demonstrated in Propositions \ref{cfpofr}, \ref{lafpofr} the existence of $n$ attracting fixed points $z_0$, ..., $z_{n-1}$ of $\overline{r(z)}=\overline{c}+1/\overline{p(z)}$.

\section{Nonexistence of post-critically finite examples}

We now prove the fact, discussed in Remark \ref{no_pcf_rem}, that there are no post-critically finite maps satisfying Theorem \ref{main_thm}.

\begin{prop}\label{no_pcf} Suppose the anti-rational map $\overline{r}(z):=\overline{c}+1/\overline{p(z)}$ has $n$ attracting fixed points. Then the orbit of $\infty$ is infinite; in particular $\overline{r}$ is not post-critically finite. 
\end{prop}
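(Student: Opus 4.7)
My plan is to argue by contradiction. First, since each of the $n$ attracting basins of $\overline{r}$ must contain at least one critical point while Lemma~\ref{branched_cover} provides an upper bound of $n$ distinct critical points, I conclude that $\overline{r}$ has exactly $n$ distinct critical points (namely $\infty$ of local degree $n$ and $n-1$ simple finite critical points), one per basin. Let $z^*$ denote the attracting fixed point whose basin $B(z^*)$ contains $\infty$; then $z^*$ is not itself a critical point, so $z^*$ is linearly (not super-) attracting, with $0 < |r'(z^*)| < 1$.

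Next, because the forward orbit of $\infty$ under $\overline{r}$ lies in $B(z^*)$, converges to $z^*$, and $z^*$ is the only $\overline{r}$-fixed point in this basin, a finite orbit must be eventually constant equal to $z^*$: $\overline{r}^m(\infty) = z^*$ for some smallest $m \geq 1$. The case $m = 1$ is ruled out directly: $\overline{c} = z^*$ would force $\overline{r}(z^*) = z^* + 1/\overline{p(z^*)} = z^*$, i.e., $1/\overline{p(z^*)} = 0$, which is impossible for finite $z^*$.

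For $m \geq 2$, I pass to the holomorphic second iterate $g := \overline{r}^2$, a rational map of degree $n^2$ for which $z^*$ is an attracting fixed point with multiplier $|r'(z^*)|^2 \in (0,1)$. A key observation is that the $n$ poles $\zeta_1, \ldots, \zeta_n$ of $r$ are each critical points of $g$ of local degree $n$ (since $\tilde{r}$, the rational map obtained from $r$ by conjugating coefficients, has a critical point of local degree $n$ at $\infty = r(\zeta_i)$), and all of them lie in $B(z^*)$ (since $g(\zeta_i) = \overline{c} \in B(z^*)$). Together with $\infty$, this contributes critical multiplicity $(n-1) + n(n-1) = n^2 - 1$ to $B(z^*)$. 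An Euler-characteristic argument via Riemann--Hurwitz then forces $B(z^*)$ to be connected and simply connected (with the immediate basin $B_0 = B(z^*)$) and $g|_{B_0}$ to be proper of degree $n^2$. Under the conformal identification $B_0 \cong \mathbb{D}$, the map $g|_{B_0}$ becomes a degree-$n^2$ Blaschke product, which has exactly $n^2$ fixed points in $\overline{\mathbb{D}}$; correspondingly $g$ has $n^2$ fixed points in $\overline{B_0}$, and since $g$ has $n^2 + 1$ fixed points in total on $\Chat$, precisely one fixed point of $g$ lies outside $\overline{B_0}$. But the remaining $n-1$ attracting fixed points of $\overline{r}$ are fixed points of $g$ in distinct basins disjoint from $\overline{B_0}$, contributing at least $n - 1$ fixed points outside $\overline{B_0}$. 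For $n \geq 3$ this gives $n-1 \geq 2 > 1$, contradicting the Blaschke count. The main obstacle is closing the borderline case $n = 2$, where $n - 1 = 1$ exactly matches the Blaschke count; this case likely requires a separate, more delicate argument exploiting the relation $p(\overline{c}) = p(z^*)$ forced by $\overline{r}(\overline{c}) = z^*$ together with the attracting-multiplier inequality $|r'(z^*)| < 1$.
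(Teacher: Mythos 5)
Your opening reductions are sound (exactly $n$ distinct critical points, one per basin; $z^*$ linearly but not super-attracting; a finite orbit of $\infty$ forces $\overline{r}^m(\infty)=z^*$; the case $m=1$ is impossible), and even your Riemann--Hurwitz claim that $B(z^*)$ equals the immediate basin $B_0$, is simply connected, and that $g|_{B_0}$ is proper of degree $n^2$, can be completed by a degree count. The genuine gap is the sentence ``correspondingly $g$ has $n^2$ fixed points in $\overline{B_0}$.'' The conformal identification $B_0\cong\mathbb{D}$ gives no control over fixed points on $\partial B_0$: the Riemann map need not extend to a homeomorphism of the closures, and even when $\partial B_0$ is locally connected the Carath\'eodory extension is only a continuous surjection. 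Thus distinct fixed points of the Blaschke product on $\mathbb{T}$ can land at one and the same fixed point of $g$ on $\partial B_0$ (and, conversely, a fixed point of $g$ on $\partial B_0$ need not be the landing point of any fixed prime end---at the $\alpha$-fixed point of the Douady rabbit polynomial, viewed on the boundary of the basin of infinity, the three accesses are permuted cyclically). Since your contradiction requires the lower bound ``at least $n^2-1$ distinct fixed points of $g$ on $\partial B_0$,'' you are implicitly assuming that the landing map on fixed prime ends is injective, and that is precisely what fails. A decisive symptom: your $m\ge 2$ argument never invokes the hypothesis $\overline{r}^m(\infty)=z^*$, so if it were correct it would prove that \emph{no} map $\overline{c}+1/\overline{p(z)}$ with $n\ge 3$ attracting fixed points exists at all, contradicting Theorems \ref{main_thm} and \ref{main_thm2} of this very paper. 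Running your count on an actual example with $n=3$ makes the failure concrete: $g=\overline{r}^2$ has at most $n^2+1=10$ distinct fixed points on $\Chat$, while one interior fixed point, eight injectively-landing fixed prime ends, and the two other attracting fixed points of $\overline{r}$ would give $11$ distinct ones; hence fixed prime ends must co-land in these examples.

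Beyond this, you concede that $n=2$ is not handled, so the proof is incomplete even on its own terms. For contrast, the paper's proof is local rather than global: it applies K\oe nig's linearization theorem to $s=\overline{r}^2$ at the attracting fixed point $z_0$ attracting $\infty$, uses the fact that the boundary of the maximal linearization domain contains a critical point $\zeta$ of $s$, notes that $\zeta$ has infinite orbit (its linearizing coordinate has modulus $t|\lambda|^{d}$ at time $d$, never repeating), and then observes that any critical point of $s$ lying in the basin of $z_0$ is either $\infty$, a pole of $\overline{r}$, or the critical value $\overline{r}(\infty)$, so that in every case the orbit of $\infty$ itself is infinite. That argument needs no fixed-point census, treats all $n>1$ uniformly, and uses exactly the dynamical information that your global counting scheme discards.
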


\begin{proof} Suppose $\overline{r}(z):=\overline{c}+1/\overline{p(z)}$ has $n$ attracting fixed points. As already discussed, the map $\overline{r}$ has $2n-2$ critical points (counting multiplicity), and $\infty$ is a critical point of degree $n-1$. This means that, not counting multiplicity, $\overline{r}$ has at most $n$ critical points. Since each attracting fixed point attracts a critical point, this means each critical point of $\overline{r}$ lies in the basin of attraction of an attracting fixed point. Consider the critical point $\infty$, and call $z_0$ the attracting fixed point so that 
\[\overline{r}^d(\infty)\xrightarrow{d\rightarrow\infty}z_0.\]
In order to use the more readily-available theory of holomorphic dynamics (in place of anti-holomorphic dynamics), let us consider the second iterate $s:=\overline{r}^2$: now $s$ is a holomorphic mapping, and we still have 
\[ s^d(\infty)\xrightarrow{d\rightarrow\infty}z_0. \] 
According to K\oe nig's Linearization Theorem (see for instance Theorem 8.2 and Lemma 8.5 of \cite{MilnorCDBook}), there exists a neighborhood $U$ of $z_0$, $t<1$ and a conformal mapping $\phi: U\rightarrow t\mathbb{D}$ so that
\[ \phi\circ s\circ\phi^{-1}(z)=\lambda\cdot z \textrm{ for } z\in \overline{t\mathbb{D}} \textrm{ and } \lambda:=s'(z_0), \] 
and moreover $\phi^{-1}(t\mathbb{T})$ contains a critical point of $s$; denote this critical point by $\zeta$. Since the orbit (under $z\mapsto\lambda z$) of any point lying on $t\mathbb{T}$ is infinite, it follows that the orbit of the critical point $\zeta$ is infinite. The chain rule implies that since $\zeta$ is a critical point of $s$, then $\zeta$ is either a critical point of $\overline{r}$ or else a critical value of $\overline{r}$. Since the non-infinite critical points and values of $\overline{r}$ are contained in separate basins of attraction, it follows that $\zeta$ is either $\infty$ or $r(\infty)$. In either case, we see that the orbit (under $\overline{r}$) of $\infty$ is infinite. This means, by definition, that $\overline{r}$ is not post-critically finite.
\end{proof}

\subsection*{Acknowledgements.} The authors would like to thank Sabyasachi Mukherjee and Malik Younsi for pointing out the references used in the proof of Proposition \ref{removabilityresult}.



\bibliographystyle{alpha}

\end{document}